
\documentclass[letterpaper, 10 pt, conference]{ieeeconf}  

\usepackage{amsmath}
\usepackage{amssymb}
\usepackage{mathrsfs}
\usepackage[all]{xy}
\newcommand{\ds}{\displaystyle}

\def\qed{\ifvmode\removelastskip\fi
{\unskip\nobreak\hfil\penalty50\hbox{}\nobreak\hfil \hbox{\vrule
height1.2ex width1.2ex}\parfillskip=0pt \finalhyphendemerits=0
\par \smallskip}}

\usepackage{fancybox}

\setcounter{secnumdepth}{3}

\newtheorem{thm}{Theorem}[section]
\newtheorem{prop}[thm]{Proposition}
\newtheorem{defin}[thm]{Definition}

\newtheorem{corol}[thm]{Corollary}

\IEEEoverridecommandlockouts                              
\overrideIEEEmargins


\title{\LARGE \bf
Characterization of accessibility for affine connection control
systems at some points with nonzero velocity}

\author{Mar\'ia Barbero Li\~n\'an 
\thanks{This work has been partially supported by MICINN (Spain) Grants MTM2008–00689 and MTM2009–08166; 2009SGR1338 of
the Catalan government, IRSES project GEOMECH (246981) within the 7th European Community Framework Program,
by Beatriu de Pin\'os fellowship from Comissionat per a Universitats i Recerca del Departament d'Innovaci\'o, Universitats i
Empresa of Generalitat de Catalunya and by Juan de la Cierva fellowship from MICINN.}
\thanks{This work was mainly developed while a postdoctoral fellow at Queen's University, Kingston, ON, Canada. M. Barbero Li\~n\'an is with the Institute for the Mathematical Sciences (CSIC-UAM-UC3M-UCM), Madrid, Spain
        {\tt\small mbarbero@icmat.es}}%
}

\begin{document}

\maketitle
\thispagestyle{empty}
\pagestyle{empty}

\begin{abstract}

Affine connection control systems are mechanical control systems
that model a wide range of real systems such as robotic legs,
hovercrafts, planar rigid bodies, rolling pennies, snakeboards and so
on. In 1997 the accessibility and a particular notion of
controllability was intrinsically described by A. D. Lewis and R.
Murray at points of zero velocity. Here, we present a novel
generalization of the description of accessibility algebra for those
systems at some points with nonzero velocity as long as the affine connection restricts to
the distribution given by the symmetric closure. The results are used to describe the
accessibility algebra of different mechanical control systems.
\end{abstract}


\section{Introduction}

A wide range of mechanical control systems can be described in terms
of an affine connection on the configuration manifold
$Q$~\cite{AndrewBook}\@.

An \textbf{analytic affine connection control system (ACCS)
$\Sigma$} is a 4-tuple $(Q,\nabla,\mathscr{Y},U)$ where
\begin{itemize}
\item $Q$ is a configuration manifold,
\item $\nabla$ is an affine connection on $Q$,
\item $\mathscr{Y}=\{Y_1,\ldots,Y_r\}$ is a set of vector fields on
$Q$ so-called \textbf{control or input vector fields},
\item $U$ is a subset of $\mathbb{R}^r$. 
\end{itemize}

The typical assumption on $U$ is to be an almost proper control set,
that is, zero is in the convex hull of $U$ ($0\in {\rm conv}(U)$)
and the affine hull of $U$ is the entire Euclidean space
$\mathbb{R}^r$ (${\rm aff}(U)=\mathbb{R}^r$).

The dynamics are described by a second-order differential equation on
$Q$
\begin{equation}\nabla_{\gamma'(t)}\gamma'(t)=u^aY_a(\gamma(t))\label{Eq-nabla} \end{equation} where
$\gamma\colon I\subset \mathbb{R} \rightarrow Q$, $u\colon I
\rightarrow U\subset \mathbb{R}^r$.

It is well-known the control system~\eqref{Eq-nabla} is equivalent to a
first\textendash{}order control\textendash{}affine system on $TQ$
\begin{equation}
\Upsilon'(t)=Z(\Upsilon(t))+u^aY^V_a(\Upsilon(t)),\label{Eq-Control-Affine-Equiv}
\end{equation}
where $\Upsilon \colon I \rightarrow TQ$ projects onto $Q$ to
$\gamma$ through $\tau_Q\colon TQ \rightarrow Q$, $Z$ is the
geodesic spray and $Y^V_a$ is the vertical lift of the vector field
$Y_a$ on $Q$ to $TQ$.

One of the key objects to describe the accessibility and
controllability of control-affine systems such as~\eqref{Eq-Control-Affine-Equiv} is the involutive closure ${\rm Lie}^{\infty}(Z, \mathscr{Y}^V)$
of the vector fields in~\eqref{Eq-Control-Affine-Equiv} and the ``symmetric" closure ${\rm Sym}^{(\infty)}(\mathscr{Y})$ of 
$\mathscr{Y}$ that will be defined in Section~\ref{Sec:ZeroV}. In~\cite{LM97} the accessibility distribution was characterized
at points of zero velocity.
In this paper we are going to extend this characterization to points
with velocity in ${\rm Sym}^{(\infty)}({\mathscr Y})$  as long as
$\nabla$ is restricted to ${\rm Sym}^{(\infty)}({\mathscr Y})$, cf.
Proposition~\ref{Corol-AccDistrNonZero}. This will allow us to
extend the characterization of accessibility for mechanical control
systems in~\cite{LM97}, cf. Theorem~\ref{Thm-AccDistrNonZero}.

The paper is organized as follows. Section~\ref{Sec:Notations}
reviews the main properties and related results about accessibility
of the mechanical control systems under study. It also introduces the
notion of a connection which restricts to a distribution that will
be an essential assumption for the results below.
Sections~\ref{SubSec:PrimitiveBrack} and~\ref{Sec:Description} contain all the novel results of this
paper. Theorem~\ref{Prop-Type-VF} characterizes a family of
generators of ${\rm Lie}^{(\infty)}(Z,\mathscr{Y}^V)$ at any $v_q\in
TQ$. This family is not a minimal set of generators, but it provides
a suitable set of generators for obtaining all the remaining results
in this paper that culminate in
Propositions~\ref{Corol-Zinv},~\ref{Corol-AccDistrNonZero} and
Theorem~\ref{Thm-AccDistrNonZero}. Section~\ref{Sec:Examples}
applies the above results to two mechanical control systems: the
planar rigid body with variable-direction thruster and the rolling
penny.

We shall assume all manifolds are paracompact, Hausdorff, and of class
$\mathcal{C}^\omega$ (analytic).  All maps and geometric objects will be assumed to be
of class $\mathcal{C}^\omega$\@.  The set of analytic
functions on a manifold $Q$ is denoted by $\mathcal{C}^\omega(Q)$\@.  For a
manifold $Q$\@, its tangent bundle will be denoted by $\tau_Q\colon
TQ\rightarrow Q$\@. If $\pi\colon E\rightarrow M$ is a vector bundle over $M$\@,
we denote by $\Gamma^\omega(E)$ the set of analytic sections of $E$\@.  By
$0_x\in E_x$ we denote the zero vector in the fiber at $x$\@. The analyticity assumption is necessary to have sufficient
and necessary characterizations of the accessibility algebra, cf.
Theorems~\ref{Th:SussmaanEtAl},~\ref{Th:LM97},~\ref{Thm-AccDistrNonZero}\@. The control set is always assumed to be almost proper. 

\section{Notation and previous concepts}\label{Sec:Notations}

This section contains the properties of control systems and some notions of
differential geometry that would be necessary at some point in the
development of this paper. This is not a detailed description. We refer 
to~\cite{AndrewBook,KN,98Lewis,LM97,NvdS, SJ72} for more
details. 

\subsection{Properties of a control system}\label{Sec:properties}

Typical studied properties of a control system are accessibility and
controllability. The former has been widely understood \cite{NvdS,SJ72}, whereas the
latter is still under study \cite{AndrewBook}. To define them formally we first need
to introduce the notion of a \textbf{reachable set ${\mathcal
R}(v_q,T)$ from $v_q\in TQ$ at time $T\in \mathbb{R}^+$}
\begin{equation*}{\mathcal R}(v_q,T)=\left\{w\in TQ \; \left| \; \begin{array}{l} \exists \mbox{ a trajectory }(\Upsilon, u) 
\mbox{ of }~\eqref{Eq-Control-Affine-Equiv} \\
\mbox{ such that } \Upsilon(0)=v_q, \\ \;
\Upsilon(T)=w\end{array}\right.\right\}\end{equation*} Then the
\textbf{reachable set from $v_q$ up to time $T$} is defined as
follows
\begin{equation*}{\mathcal R}(v_q,\leq T)=\bigcup_{t\in[0,T]}{\mathcal
R}(v_q,t).\end{equation*}

These reachable sets collect the information about states and
velocities that can be reached by trajectories of the control
system~\eqref{Eq-Control-Affine-Equiv}\@. When we study a mechanical system, we might just be
interested in the reachable states. That is why we define the
\textbf{reachable set ${\mathcal R}_Q(q,T)$ in $Q$ from $q$ at
time $T$},
\begin{equation*}{\mathcal R}_Q(q,T)=\left\{\overline{q}\in Q \; \left| \; \begin{array}{l} \exists \mbox{ a trajectory }
(\Upsilon, u) \mbox{ of }~\eqref{Eq-Control-Affine-Equiv}\\
\mbox{ such that } \tau_Q (\Upsilon(0))=q, \\ \; \tau_Q
(\Upsilon(T))=\overline{q}\end{array}\right.\right\}.\end{equation*} As before,
the \textbf{reachable set ${\mathcal R}_Q(q,\leq T)$ in $Q$ from $v_q$
up to time $T$} is defined as follows
\begin{equation*}{\mathcal R}_Q(q,\leq T)=\bigcup_{t\in[0,T]}{\mathcal
R}_Q(q,t).\end{equation*}

\begin{defin} Let $\Sigma$ be an analytic affine
connection control system and let $v_q\in TQ$.

 \begin{enumerate}
 \item $\Sigma$ is
\textbf{accessible from $v_q$} if there exists $T>0$ such that
\begin{equation*}{\rm int}({\mathcal R}(v_q,\leq t))\neq \emptyset \quad
\mbox{for } t\in (0,T].\end{equation*}

\item $\Sigma$ is \textbf{configuration accessible from
$q\in Q$} if there exists
$T>0$ such that \begin{equation*}{\rm int} ({\mathcal R}_Q(q,\leq
t)) \neq \emptyset \quad \mbox{for each } t\in (0,T].\end{equation*}

\item $\Sigma$ is \textbf{small-time locally controllable
(STLC) from $v_q$} if there exists $T>0$ such that
\begin{equation*} v_q\in {\rm int}({\mathcal R}(v_q,\leq t))\quad
\mbox{for each } t\in (0,T].\end{equation*}

\item $\Sigma$ is \textbf{small-time locally configuration controllable
(STLCC) from $v_q$} if there exists $T>0$ such that
\begin{equation*} v_q\in {\rm int}({\mathcal R}_Q(v_q,\leq t))\quad
\mbox{for each } t\in (0,T].\end{equation*}
\end{enumerate} \label{Def:AccControl}
\end{defin}

In this paper we only focus on extending the characterization of the
accessibility algebra given in~\cite{LM97}, but we include the
definition of controllability for completeness of the paper.

\subsection{Characterization of accessibility algebra at zero velocity}\label{Sec:ZeroV}

For any control-affine system on $Q$
\begin{equation} \dot{q}=f_0(q)+u^af_a(q),
\label{eq:control-aff-system}
\end{equation}
the \textbf{accessibility distribution at $q$} is the involutive
closure at $q$ of the distribution generated by the drift vector
field $f_0$ and the control vector fields $\{f_a\}_{a=1,\dots,r}$.

\begin{thm}[\cite{NvdS, SJ72}] The control-affine system
in~\eqref{eq:control-aff-system} is accessible from $q$ if and only
if ${\rm Lie}^{(\infty)}(f_0,f_1,\dots,
f_r)_q\simeq T_qQ$.\label{Th:SussmaanEtAl}
\end{thm}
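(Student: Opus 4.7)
The proof splits into two implications, which I would handle using a common collection of analytic tools: Sussmann's orbit theorem, Nagano's theorem on analytic involutive families, and Krener's theorem.

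For the necessity direction ($\Rightarrow$), I would argue contrapositively. Suppose ${\rm Lie}^{(\infty)}(f_0, f_1, \dots, f_r)_q$ has dimension $k$ strictly less than $\dim Q$. By Sussmann's orbit theorem combined with Nagano's theorem, the orbit $O(q)$ of the family $\{f_0, f_1, \dots, f_r\}$ through $q$ is an immersed submanifold of $Q$ whose tangent space at every point coincides with the Lie algebra there, and in particular $\dim O(q) = k$ (this step is where analyticity is used). Every trajectory of $\dot{q} = f_0 + u^a f_a$ starting at $q$ is a concatenation of flows of vector fields in this family for controls in $U$ and hence remains inside $O(q)$. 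Therefore $\mathcal{R}(q, \leq T) \subseteq O(q)$ for every $T > 0$, and this set has empty interior in $Q$.

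For the sufficiency direction ($\Leftarrow$), assume the Lie algebra spans $T_qQ$. By the same orbit theorem plus Nagano, the orbit $O(q)$ has tangent space equal to ${\rm Lie}^{(\infty)}(f_0, \dots, f_r)_q = T_qQ$, so $O(q)$ is an open neighborhood of $q$ in $Q$. Krener's theorem for analytic systems then yields that the forward-time reachable set $\mathcal{R}(q, \leq T)$ has nonempty interior relative to $O(q)$ for every $T > 0$, which is then also interior in $Q$.

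The main obstacle is Krener's theorem itself, which bridges the gap between the orbit (defined using both forward and backward flows of every field, including $-f_0$) and the forward-in-time reachable set in the presence of a drift $f_0$ that must always act. The standard argument constructs, from piecewise-constant controls valued in $U$, an end-point map of the form $\Phi(t_1, \dots, t_N) = \phi_{t_N}^{g_N} \circ \cdots \circ \phi_{t_1}^{g_1}(q)$ whose differential is surjective at some parameter point with all $t_i > 0$. The almost-properness assumption on $U$, namely $0 \in {\rm conv}(U)$ and ${\rm aff}(U) = \mathbb{R}^r$, is what allows one to realize arbitrary Lie bracket directions of $\{f_0, f_1, \dots, f_r\}$ as high-order variations in positive time and thus to transfer the Lie algebraic rank condition into geometric openness of the reachable set.
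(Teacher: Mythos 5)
The paper does not actually prove this statement: it is quoted as a classical result and attributed to the references (Sussmann--Jurdjevic and Nijmeijer--van der Schaft), so there is no in-paper argument to compare against. Your proposal is essentially the standard proof from those sources (Sussmann/Nagano orbit theorem for necessity, Krener's theorem for sufficiency) and is correct in outline, with two points worth tightening. First, the orbit and Lie algebra that enter the argument are really those of the family of admissible vector fields $\{f_0+u^af_a \,:\, u\in U\}$, since trajectories follow these fields and not $f_1,\dots,f_r$ individually; your necessity step still works because each admissible field takes values in ${\rm Lie}^{(\infty)}(f_0,\dots,f_r)$, which by analyticity is the tangent distribution of the orbit, so trajectories cannot leave the $k$-dimensional leaf. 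Second, the role of the almost-proper control set is slightly misplaced: the hypothesis ${\rm aff}(U)=\mathbb{R}^r$ is used \emph{before} Krener's theorem, to guarantee that the Lie algebra generated by the admissible fields coincides with ${\rm Lie}^{(\infty)}(f_0,f_1,\dots,f_r)$ (affine combinations of finitely many admissible fields recover $f_0$ and each $f_a$), after which Krener's theorem applied to that family gives nonempty interior of $\mathcal{R}(q,\le t)$ for every $t>0$; the condition $0\in{\rm conv}(U)$ is not needed for accessibility, only for controllability-type conclusions. With those clarifications your argument is the standard one and fills the citation correctly.
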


This same result can also be applied to the control system~\eqref{Eq-Control-Affine-Equiv}\@. Thus, 
$\Sigma$ is accessible from $v_q\in TQ$ if and only if
${\rm Lie}^{(\infty)}(Z,\mathscr{Y}^V)_{v_q}\simeq T_{v_q}TQ$.
Observe that to define the accessibility algebra the control set
does not play any role.

Let us review now the intrinsic description of the accessibility
algebra at zero velocity given in~\cite{LM97}\@.  At $0_q$ there exists the
following natural isomorphism \begin{equation}T_{0_q}TQ \simeq T_qQ
\oplus T_qQ,\label{Eq:Split-0q}\end{equation}
cf.~\cite[Lemma~6.33]{AndrewBook}.

Given an affine connection $\nabla$\@, there exists a complementary
subbundle $HTQ$ of the vertical subbundle $VTQ=\ker(T\tau_Q)$ such
that $TTQ=HTQ\oplus VTQ$\@. This complementary subbundle is called
\textbf{horizontal subbundle}\@.  We refer to \cite[page~87]{KN} for
a characterization of this subbundle. Then the splitting
in~\eqref{Eq:Split-0q} corresponds with the splitting into
horizontal and vertical subspace.

Remember that for an
affine connection the \textbf{symmetric product} of two vector
fields is defined  as follows
\begin{equation*} \langle X \colon Y \rangle= \nabla_X Y +\nabla_Y
X.
\end{equation*}
Analogously to the involutive closure of vector fields,
we define the symmetric closure ${\rm Sym}^{(\infty)} ({\mathscr Y})$  of a set $\mathscr{Y}$ of control
vector fields as 
the distribution being the smallest $\mathbb{R}$-subspace of vector
fields containing $\mathscr{Y}$ and closed under the symmetric
product.

\begin{thm}{\cite[Lemma 5.7]{LM97}} Let $\Sigma$ be an ACCS. If ${\mathcal C}_\Sigma=\{Z,\mathscr{Y}^V\}$, then, for $q\in Q$,
\begin{equation*}{\rm Lie}^{(\infty)}({\mathcal C}_\Sigma)_{0_q}={\rm
Lie}^{(\infty)}({\rm Sym}^{(\infty)} ({\mathscr Y}))_{q}\oplus {\rm
Sym}^{(\infty)}({\mathscr Y})_{q}.
\end{equation*}
Moreover: \begin{enumerate} \item $\Sigma$ is accessible from $0_q$ if
and only if ${\rm Sym}^{(\infty)}(\mathscr{Y})_{q}=
T_{q}Q$. 
\item $\Sigma$ is configuration accessible from $0_q$ if and only if
${\rm Lie}^{(\infty)}({\rm
Sym}^{(\infty)}(\mathscr{Y}))_q=T_qQ$.
\end{enumerate}\label{Th:LM97}
\end{thm}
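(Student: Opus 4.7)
The plan is to exploit the canonical splitting $T_{0_q}TQ\cong T_qQ\oplus T_qQ$ (horizontal $\oplus$ vertical) and analyze iterated brackets of $Z$ and elements of $\mathscr{Y}^V$ according to the two components separately. The crucial building blocks are: (i) $Z(0_q)=0$, because the geodesic spray is homogeneous of degree two in velocity; (ii) $Y^V(0_q)$ is purely vertical, corresponding to $(0,Y(q))$; (iii) $[X^V,Y^V]=0$ for all $X,Y\in\Gamma^\omega(TQ)$; (iv) $[Z,Y^V](0_q)=(-Y(q),0)$, so the bracket is purely horizontal and, up to sign, recovers the horizontal lift at the zero vector; and (v) $[X^V,[Z,Y^V]](0_q)=(0,\langle X:Y\rangle(q))$. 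Identities (iv) and (v) are straightforward coordinate computations using the local expression $Z=v^i\partial_{q^i}-\Gamma^i_{jk}v^jv^k\partial_{v^i}$; the symmetrization of the Christoffel symbols is precisely what produces $\nabla_XY+\nabla_YX$.

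For the inclusion $\supseteq$, I would construct, by induction on the length of a symmetric-product word $W\in{\rm Sym}^{(\infty)}(\mathscr{Y})$, an explicit iterated bracket $B_W\in{\rm Lie}^{(\infty)}(\mathcal{C}_\Sigma)$ with $B_W(0_q)=(0,W(q))$. The base case is $B_{Y_a}=Y_a^V$; the inductive step uses $[X^V,[Z,B_W]]$, which by a computation analogous to (v) contributes $\langle X:W\rangle(q)$ in the vertical component at $0_q$ (the lower-order corrections of $B_W$ vanish under the iterated bracket at $0_q$, because they lie in the vertical subbundle and vertical lifts commute). Bracketing $Z$ with each $B_W$ then yields $(-W(q),0)$ horizontally by (iv); iterating Lie brackets among these horizontal vector fields at $0_q$ generates all of ${\rm Lie}^{(\infty)}({\rm Sym}^{(\infty)}(\mathscr{Y}))_q$ in the horizontal component.

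For the inclusion $\subseteq$, I would induct on bracket depth. Using the Jacobi identity I push instances of $Z$ outward and collect consecutive $Y^V$'s; since $[X^V,Y^V]=0$, every Lie word with no occurrence of $Z$ vanishes identically, while a word with one $Z$ in innermost position reduces, after the inductive application of (v), to a symmetric product in the vertical slot at $0_q$. Words containing $Z$ in non-innermost position are then dealt with by Jacobi to reduce to the previous case plus an extra horizontal contribution of the form $[Z,\,\cdot\,](0_q)$, which by (iv) is a horizontal copy of an element of ${\rm Sym}^{(\infty)}(\mathscr{Y})_q$. Closing under Lie brackets of the horizontal components yields the ${\rm Lie}^{(\infty)}({\rm Sym}^{(\infty)}(\mathscr{Y}))_q$ summand; no other vectors can appear.

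The two accessibility statements are then immediate consequences of Theorem~\ref{Th:SussmaanEtAl} applied to~\eqref{Eq-Control-Affine-Equiv}. $\Sigma$ is accessible from $0_q$ iff the decomposition exhausts $T_{0_q}TQ\cong T_qQ\oplus T_qQ$; since the vertical summand is ${\rm Sym}^{(\infty)}(\mathscr{Y})_q$, this is equivalent to ${\rm Sym}^{(\infty)}(\mathscr{Y})_q=T_qQ$ (and then the horizontal summand ${\rm Lie}^{(\infty)}({\rm Sym}^{(\infty)}(\mathscr{Y}))_q=T_qQ$ is automatic). For configuration accessibility, push forward by $T\tau_Q$: since the vertical subspace lies in $\ker T\tau_Q$, only the horizontal summand survives, and the condition becomes ${\rm Lie}^{(\infty)}({\rm Sym}^{(\infty)}(\mathscr{Y}))_q=T_qQ$. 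The main obstacle is the bookkeeping in the $\subseteq$ direction: the value of an iterated bracket at $0_q$ depends on derivatives of its constituents, not just their values at $0_q$, so the inductive step must track how repeated $\partial_{v^j}$-differentiations interact with the quadratic velocity term in $Z$; this is exactly the mechanism that makes the symmetric product, rather than a mere Lie bracket, appear in the vertical slot.
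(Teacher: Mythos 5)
You should first note that the paper does not actually prove Theorem~\ref{Th:LM97}: it is quoted from \cite{LM97} (Lemma 5.7 there), so the comparison is with that proof and with the paper's own analogous machinery at general $v_q$ (Theorem~\ref{Prop-Type-VF} and Proposition~\ref{Corol-AccDistrNonZero}). Your building blocks are correct ($Z(0_q)=0$, $[X^V,Y^V]=0$, $[Z,Y^V](0_q)=(-Y(q),0)$, $[X^V,[Z,Y^V]]=\langle X\colon Y\rangle^V$), and your two-inclusion argument for the displayed decomposition is essentially the same primitive-bracket strategy used in \cite{LM97} and mirrored in Theorem~\ref{Prop-Type-VF}. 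One simplification: since $[X^V,[Z,Y^V]]=\langle X\colon Y\rangle^V$ holds identically on $TQ$, not merely at $0_q$, every element of ${\rm Sym}^{(\infty)}(\mathscr Y)$ is realized \emph{exactly} as a vertical lift by an iterated bracket, so the ``lower-order corrections'' you try to argue away in the $\supseteq$ step do not arise; the horizontal summand is then obtained from $[[Z,W_1^V],[Z,W_2^V]](0_q)=([W_1,W_2](q),0)$. Statement (1) of the ``moreover'' part is indeed immediate from the decomposition together with Theorem~\ref{Th:SussmaanEtAl}.

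The genuine gap is your treatment of statement (2). Configuration accessibility is \emph{not} an immediate consequence of Theorem~\ref{Th:SussmaanEtAl} plus the pointwise splitting at $0_q$: the set ${\mathcal R}_Q(q,\le T)=\tau_Q({\mathcal R}(0_q,\le T))$ is swept out by trajectories that immediately leave the zero section, i.e.\ pass through points of nonzero velocity where your decomposition gives no information, and ``pushing forward by $T\tau_Q$'' only controls the tangent space of the orbit at the single point $0_q$. For the ``only if'' direction one needs an invariance argument: because ${\rm Sym}^{(\infty)}(\mathscr Y)$ is geodesically invariant and the $Y^V_a$-flows preserve fibers, the whole orbit of $0_q$ under $Z$ and $\mathscr Y^V$ lies over the maximal integral manifold of ${\rm Lie}^{(\infty)}({\rm Sym}^{(\infty)}(\mathscr Y))$ through $q$ with velocities in ${\rm Sym}^{(\infty)}(\mathscr Y)$; only then does ${\mathcal R}_Q(q,\le T)$ have empty interior when ${\rm Lie}^{(\infty)}({\rm Sym}^{(\infty)}(\mathscr Y))_q\neq T_qQ$. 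For the ``if'' direction one needs the orbit theorem and a Krener-type accessibility statement \emph{inside} the orbit $N$ through $0_q$, combined with the fact (supplied by the decomposition) that $\tau_Q|_N$ is a submersion near $0_q$, since the system is in general not accessible on $TQ$ in this case. These arguments are separate results in \cite{LM97}; as written, your final step asserts the equivalence rather than proving it, and the necessity half in particular cannot be recovered from the rank condition at $0_q$ alone.
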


\subsection{On connections restricted to distributions} \label{Sec:NablaRestricted}

As mentioned above, the generalization of Theorem~\ref{Th:LM97} in this paper is only possible under suitable assumptions,
cf. Theorem~\ref{Thm-AccDistrNonZero}\@.
An \textbf{affine connection $\nabla$ restricts to a distribution}
$\mathcal{D}$ if $\nabla_X Y$ is a section of
$\mathcal{D}$ for every section $Y$ of $\mathcal{D}$ and $X\in \Gamma^\omega(TQ)$~\cite{98Lewis}.

Associated with an affine connection, we define the
\textbf{curvature of $\nabla$} as a $(1,3)$-tensor field on $Q$ such
that
\begin{equation*}R(X,Y)W=\nabla_X\nabla_YW-\nabla_Y\nabla_XW-\nabla_{[X,Y]}W\end{equation*}
for $X,Y,W\in \Gamma^\omega(TQ)$.
\begin{prop}{\cite[Proposition 4.3]{98Lewis}} Let $q\in Q$ and let
$u,v\in T_qQ$. If $\nabla$ restricts to ${\mathcal D}$, then the
endomorphism $R(u, v)$ of $T_qQ$ leaves the subspace $\mathcal{D}_q$
invariant. \label{Prop:Andrew98R}
\end{prop}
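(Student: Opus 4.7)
The plan is to exploit the tensorial nature of the curvature: since $R$ is a $(1,3)$-tensor, its value $R(u,v)w$ at $q$ depends only on the pointwise values of the arguments, so I may replace $u,v,w$ by any convenient local extensions and work entirely with vector fields.

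First, I would fix $u,v\in T_qQ$ and $w\in\mathcal{D}_q$, and choose any analytic vector fields $X,Y\in\Gamma^\omega(TQ)$ with $X(q)=u$, $Y(q)=v$, together with a local section $W\in\Gamma^\omega(\mathcal{D})$ defined on a neighborhood of $q$ with $W(q)=w$ (such a local section exists because $\mathcal{D}$ is an analytic distribution). Then by definition
\begin{equation*}
R(X,Y)W=\nabla_X\nabla_YW-\nabla_Y\nabla_XW-\nabla_{[X,Y]}W.
\end{equation*}

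Next I would apply the restriction hypothesis term by term. Because $W$ is a section of $\mathcal{D}$ and $\nabla$ restricts to $\mathcal{D}$, the vector field $\nabla_YW$ is again a section of $\mathcal{D}$; applying the hypothesis a second time (with $X$ as the differentiating field), $\nabla_X\nabla_YW$ is a section of $\mathcal{D}$. The symmetric argument shows $\nabla_Y\nabla_XW$ is a section of $\mathcal{D}$. Finally, since $[X,Y]$ is just some vector field on $Q$ and $W$ is a section of $\mathcal{D}$, a single application of the hypothesis gives that $\nabla_{[X,Y]}W$ lies in $\mathcal{D}$. Adding these three facts, $R(X,Y)W$ is a section of $\mathcal{D}$.

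Evaluating at $q$ and using tensoriality yields $R(u,v)w=R(X,Y)W(q)\in\mathcal{D}_q$, which is the claim. There is no real obstacle here; the only point requiring a word of care is the existence of the local section $W$ through the prescribed value $w\in\mathcal{D}_q$, which is immediate in the analytic (or smooth, constant rank) setting assumed throughout the paper.
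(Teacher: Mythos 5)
Your proof is correct: tensoriality of the curvature plus three applications of the restriction hypothesis (one for each term $\nabla_X\nabla_YW$, $\nabla_Y\nabla_XW$, $\nabla_{[X,Y]}W$) is exactly the standard argument, and the existence of a local section of $\mathcal{D}$ through $w$ is immediate in the setting considered here. Note that the paper itself offers no proof of this proposition, it simply imports it from \cite[Proposition~4.3]{98Lewis}, and your argument coincides with the one given there.
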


Another object of interest is the following one.

\begin{defin}
A distribution $\mathcal{D}$ on $Q$ is \textbf{geodesically
invariant} under an affine connection $\nabla$ on $Q$ if, for every
geodesic $\gamma\colon I\to Q$ for which
$\gamma'(t_0)\in\mathcal{D}_{\gamma(t_0)}$ for some $t_0\in I$\@, it
holds that $\gamma'(t)\in\mathcal{D}_{\gamma(t)}$ for every $t\in
I$\@.
\end{defin}

From the definition we have that a distribution $\mathcal{D}$ on $Q$
is geodesically invariant under an affine connection
$\nabla$ on $Q$ if, as a submanifold of $TQ$\@, $\mathcal{D}$ is
invariant under the geodesic spray associated with $\nabla$\@.  One
can then show that a distribution is geodesically invariant if and
only if the symmetric product of any $\mathcal{D}$-valued vector
fields is again a $\mathcal{D}$-valued vector field~\cite[Theorem
5.4]{98Lewis}.

Observe that if $\nabla$ restricts to a distribution
$\mathcal{D}$, then $\mathcal{D}$ is geodesically invariant under
$\nabla$.
%
%

\section{Primitive brackets}\label{SubSec:PrimitiveBrack}

A systematic way to construct the accessibility algebra is to
compute iteratively the Lie brackets in ${\rm Lie}^{\infty}(Z,
{\mathscr{Y}}^V)$. One of the key points to obtain the characterization of the
accessibility algebra reviewed in Theorem~\ref{Th:LM97} was to
identify a family of primitive brackets that were enough to span the
accessibility distribution at $0_q$. The novel result of this
section is to identify the primitive brackets for the accessibility
distribution at non-zero velocity. At zero velocity we recover the
same primitive brackets as in~\cite{LM97}.

\begin{thm} \label{Prop-Type-VF} If $v_q\in TQ$, then any tangent
vector in ${\rm Lie}^{\infty}(Z, {\mathscr{Y}}^V)_{v_q}$ is spanned
by a linear combination of elements in:
\begin{enumerate}
\item \label{VF1} $\{Z_{v_q}\}$;
\item \label{VF2} ${\mathcal A}_{v_q}=({\rm Sym}^{(\infty)}(\mathscr{Y}))^V_{v_q}$, i. e., the vertical lift of the
smallest distribution containing ${\mathscr{Y}}$ and closed under
the symmetric product;
\item \label{VF3} ${\mathcal B}_{v_q}=\{{\rm ad}_Z^l({\rm Sym}^{(\infty)}(\mathscr{Y}))^V_{v_q} \, | \, l\in \mathbb{N} \cup
\{0\}\}$, i.e., the smallest distribution containing ${\mathcal A}$
and invariant under the geodesic spray;
\item \label{VF4} ${\mathcal C}_{v_q}={\rm Lie}^{(\infty)}(\{{\rm ad}_Z^l({\rm Sym}^{(\infty)}(\mathscr{Y}))^V \, | \, l\in \mathbb{N}\})_{v_q}$, i.e., the
smallest involutive distribution containing ${\mathcal B}-{\mathcal A}$.
\end{enumerate}
\begin{proof}
As stated in \cite[Proposition 3.1]{LM97} any vector field in ${\rm
Lie}^{(\infty)}(Z, {\mathscr{Y}}^V)$ can be written as a linear combination of 
brackets of the form \[[W_k,[W_{k-1},[\dots,[W_2,W_1]\dots]]]\]
where $W_i\in \{Z, {\mathscr{Y}}^V\}$, 
$i=1,\dots,k$. Thus we only need to prove this proposition for
Lie brackets of this form.

Let us prove the result by induction on the length $k$ of the Lie
brackets that generate the entire ${\rm Lie}^{(\infty)}(Z,
{\mathscr{Y}}^V)$ at $v_q$.

For $k=1$, the only possible vector fields are $Z$ and $Y^V$. These
vector fields live in the distributions of type \ref{VF1} and
\ref{VF2}, respectively.

We prove now the result for some Lie brackets of small length so
that it is clear how to prove the induction step.

For $k=2$, the only nonzero vector field is $[Z,Y^V]$ that lives in
the distribution of type \ref{VF3}.

For $k=3$, there are $[Z,[Z,Y^V]]$ and $[Y_2^V,[Z,Y_1^V]]=\langle
Y_2\colon Y_1 \rangle^V$ that live in the distribution of type
\ref{VF3} and \ref{VF2}, respectively.

For $k=4$, the vector fields are the following ones:
\begin{itemize}
\item $[Z,[Z,[Z,Y^V]]]={\rm ad}_Z^3 (Y^V)$ that lives in the
distribution of type \ref{VF3}.
\item $[Y_2^V,[Z,[Z,Y_1^V]]]$ is not that straightforward and we must use the Jacobi identity.
\begin{align*}
[Y_2^V,&\, [Z,[Z,Y_1^V]]]=-[[Z,Y_1^V],[Y_2^V,Z]  \\&\,
-[Z,[[Z,Y_1^V],Y_2^V]]=[[Z,Y_1^V],[Z,Y_2^V]] 
\\&\, +[Z,\langle Y_2\colon Y_1 \rangle^V].
\end{align*}
Thus the first summand is one of type \ref{VF4} and the second one
is of type \ref{VF3}.
\item $[Z,[Y_2^V,[Z,Y_1^V]]]=[Z,\langle Y_2\colon Y_1 \rangle^V]$ is of type \ref{VF3}.
\item $[Y_3^V,[Y_2^V,[Z,Y_1^V]]]=[Y_3^V,\langle Y_2 \colon Y_1 \rangle^V]=0$.
\end{itemize}

Now we have already given an idea how to rewrite the Lie brackets
conveniently in order to express them as a linear combination of
vector fields in any of the four types of vector fields described in
the statement of Theorem~\ref{Prop-Type-VF}. Let us prove the
induction step.

Assume that any Lie bracket up to length $k$ can be written as a
linear combination of vector fields among the four types mentioned
in Theorem~\ref{Prop-Type-VF}, then let us
prove it is true for Lie brackets of length $k+1$.

By assumption any Lie bracket of length $k>1$ is either of type
\ref{VF2} or  \ref{VF3} or \ref{VF4}. Then we only need to bracket
these types of vector fields with $Z$ and the family of vertical vector
fields ${\mathscr{Y}^V}$.

\begin{itemize}
\item $[Z,{\mathcal A}]$ is of type \ref{VF3}.
\item $[Z,{\mathcal B}]$ is still of type \ref{VF3}.
\item $[Z,{\mathcal C}]$ needs some more work.

A Lie bracket in ${\mathcal C}$ can be written as
\[[{\rm ad}_Z^{k_s}W_s^V,[{\rm ad}_Z^{k_{s-1}}W_{s-1}^V,[\dots,[{\rm ad}_Z^{k_2}W_2^V,{\rm ad}_Z^{k_1}W_1^V]\dots]]],\]
where $W_i\in {\rm Sym}^{(\infty)}(\mathscr{Y})$, $k_i\in \mathbb{N}$, $i=1,\dots,s$.

We prove that $[Z,{\mathcal C}]$ can always be written as a linear
combination of vector fields of type 3 and 4 by induction on the
number of vertical vector fields involved in a vector field of type
${\mathcal C}$.

For $s=1$,
\[[Z,{\rm ad}_Z^{k_1}W_1^V]={\rm ad}_Z^{k_1+1}W_1^V\]
is trivially of type \ref{VF3}.

Let us introduce the following notation to shorten the expressions:
\begin{align}{\mathcal V}_i&={\rm ad}_Z^{k_i}W_i^V, \label{Eq-V-1} \\
{\mathcal V}_{ij}&=[{\rm ad}_Z^{k_j}W_j^V,{\rm ad}_Z^{k_i}W_i^V]=[{\mathcal V}_j,{\mathcal V}_i], \label{Eq-V-2} \\
&\dots \nonumber \\
{\mathcal V}_{i_1\dots i_s}&=[{\rm
ad}_Z^{k_{i_s}}W_{i_s}^V,[\dots,[{\rm ad}_Z^{k_{i_2}}W_{i_2}^V,{\rm
ad}_Z^{i_1}W_{i_1}^V]\dots]]. \label{Eq-V-s}
\end{align}

For $s=2$,
\begin{align*}[Z,{\mathcal V}_{12}]&=-[{\mathcal V}_1,[Z,{\mathcal V}_2]]-[{\mathcal V}_2,[{\mathcal V}_1,Z]]\\& =[{\rm
ad}_Z^{k_2+1}W_2^V,{\mathcal V}_1]+[{\mathcal V}_2,{\rm
ad}_Z^{k_1+1}W_1^V].
\end{align*}
Thus it is of type \ref{VF4}.

 The induction step is the following one for $s>2$: If
$[Z,{\mathcal V}_{i_1\dots i_s}]$ can be written as a linear
combination of vector fields of type 4 for any vector field
${\mathcal V}_{i_1\dots i_s}$ in ${\mathcal C}$ with $s$ vertical
vector fields involved, see (\ref{Eq-V-s}), then $[Z,{\mathcal
V}_{i_1\dots i_{s+1}}]$ is a linear combination of vector fields of
type 4 for any vector field ${\mathcal V}_{i_1\dots i_{s+1}}$ in
${\mathcal C}$ with $s+1$ vertical vector fields involved.
\begin{multline*}[Z,{\mathcal V}_{i_1\dots i_{s+1}}]
=[{\rm ad}_Z^{k_{s+1}+1}W_{s+1}^V,{\mathcal V}_{i_1\dots
i_{s}}]\\+[{\rm ad}_Z^{k_{s+1}}W_{s+1}^V,[Z,{\mathcal V}_{i_1\dots
i_{s}}]].
\end{multline*}
The first summand is of type \ref{VF4}. By induction hypothesis
$[Z,{\mathcal V}_{i_1\dots
i_{s}}]$ can be expressed as a linear
combination of vector fields of type \ref{VF4}. The induction concludes.

\item $[Y^V, {\mathcal A}]=0$.

\item $[Y^V,{\mathcal B}]$ needs some work. An element of ${\mathcal
B}$ is given by ${\rm ad}^s_Z(W^V)$ where $W^V\in {\mathcal A}$,
then
\begin{align*}
[Y^V,&\, {\rm ad}^s_Z(W^V)]=\\&\, -[{\rm
ad}^{s-1}_Z(W^V),[Y^V,Z]]-[Z,[{\rm ad}^{s-1}_Z(W^V),Y^V]]\\&\,
=[{\rm ad}^{s-1}_Z(W^V),[Z,Y^V]]+[Z,[Y^V,{\rm
ad}^{s-1}_Z(W^V)]].\end{align*} The first summand is of type
\ref{VF4}. For the second one we need to do further study. By induction on $s$ 
$[Y^V,{\rm ad}^{s-1}_Z(W^V)]$ can
be written as a linear combination of vector fields of type \ref{VF2},
\ref{VF3} and \ref{VF4}. As the bracketing of $Z$ with any of these
three types of distributions is of type \ref{VF3} and \ref{VF4}
according to the first cases considered, our reasoning concludes.

\item $[Y^V, {\mathcal C}]$ needs some more work. As in the case $[Z,{\mathcal C}]$, we consider
induction on the number $s$ of vertical lift of vector fields in
${\mathcal C}$ in order to prove that all the vector fields in
$[Y^V, {\mathcal C}]$ can be expressed as a linear combination of
vector fields of type \ref{VF4}.

For $s=1$, we have $[Y^V,{\rm ad}_Z^{k_1}W_1^V]$.

We prove it by induction on $k_1$. For $k_1=1$,
\[[Y^V,{\rm
ad}_Z^{1}W_1^V]=\langle Y\colon W_1\rangle^V.\]

By induction we prove that for $k_1>1$ \begin{align*}[Y^V,& \, {\rm
ad}_Z^{k_1}W_1^V]=(k_1-1)[{\rm ad}_Z^{k_1-1}(W_1^V),{\rm
ad}_Z^1(Y^V)]\\& \, +\sum_{j=1}^{k_1-2}A_j^{(k_1)}[{\rm
ad}_Z^j(W_1^V),{\rm ad}_Z^{k_1-j}(Y^V)]\\ & \, +{\rm
ad}_Z^{k_1-1}(\langle Y\colon W_1 \rangle^V)\\& \,
=\sum_{j=1}^{k_1-1}A_j^{(k_1)}[{\rm ad}_Z^j(W_1^V),{\rm
ad}_Z^{k_1-j}(Y^V)]\\& \, +{\rm ad}_Z^{k_1-1}(\langle Y\colon W_1
\rangle^V),\end{align*} where $A_j^{(k_1)}\in \mathbb{N}$ satisfies
\begin{align*}
A^{(k_1)}_0&=\,1,\\
A^{(k_1)}_1&=\,A^{(k_1-1)}_1,\\
A^{(k_1)}_j&=\,A_j^{(k_1-1)}+A_{j-1}^{(k_1-1)}, \quad \rm{for} \;\; 2\leq j \leq k_1-2, \\
A^{(k_1)}_{k_1-1}&=\,k_1-1.
\end{align*}

For $k_1=2$, \begin{align*}[Y^V,& \; {\rm
ad}_Z^{2}W_1^V]=\\&-[[Z,W_1^V],[Y^V,Z]]-[Z,[[Z,W_1^V],Y^V]]\\&=[[Z,W_1^V],[Z,Y^V]]\\&+[Z,\langle
Y\colon W_1\rangle^V].\end{align*}

Assume it is true for $k_1$, let us prove it for $k_1+1$.
\begin{align*}[Y^V, & \, {\rm
ad}_Z^{k_1+1}W_1^V]=-[{\rm ad}_Z^{k_1}W_1^V,[Y^V,Z]]\\ & \,
 -[Z,[{\rm ad}_Z^{k_1}W_1^V,Y^V]]=[{\rm
ad}_Z^{k_1}W_1^V,[Z,Y^V]]\\& \, +[Z,[(k_1-1)[{\rm
ad}_Z^{k_1-1}(W_1^V),{\rm ad}_Z^1(Y^V)]]]\\
& \, +\sum_{j=1}^{k_1-2}A^{(k_1)}_j[Z,[{\rm ad}_Z^j(W_1^V),{\rm
ad}_Z^{k_1-j}(Y^V)]]\\ & \, +{\rm ad}_Z^{k_1}(\langle Y\colon W_1
\rangle^V)=[{\rm ad}_Z^{k_1}W_1^V,[Z,Y^V]]\\& \, -(k_1-1)[{\rm
ad}_Z^1(Y^V),[Z,{\rm ad}_Z^{k_1-1}(W_1^V)]]\\& \, -(k_1-1)[{\rm
ad}_Z^{k_1-1}(W_1^V),[{\rm ad}_Z^1(Y^V),Z]]\\&
\,-\sum_{j=1}^{k_1-2}A^{(k_1)}_j[{\rm ad}_Z^{k_1-j}(Y^V),[Z,{\rm
ad}_Z^j(W_1^V)]]\\& \,-\sum_{j=1}^{k_1-2}A^{(k_1)}_j[{\rm
ad}_Z^j(W_1^V),[{\rm ad}_Z^{k_1-j}(Y^V),Z]]\\& \,+{\rm
ad}_Z^{k_1}(\langle Y\colon W_1 \rangle^V)=k_1[{\rm
ad}_Z^{k_1}W_1^V,[Z,Y^V]]\\& \,+A^{(k_1+1)}_j\sum_{j=1}^{k_1-1}[{\rm
ad}_Z^{j}(W_1^V),{\rm ad}_Z^{k_1-j+1}(Y^V)]\\ & \, +{\rm
ad}_Z^{k_1}(\langle Y\colon W_1 \rangle^V)
\end{align*}
Thus it is a linear combination of vector of type \ref{VF3} and
\ref{VF4}.

For $s=2$, according to the notation introduced in (\ref{Eq-V-1})
and (\ref{Eq-V-2}):
\begin{align*}[Y^V,{\mathcal V}_{12}]&=-[{\mathcal V}_1,[Y^V,{\mathcal V}_2]]-[{\mathcal V}_2,[{\mathcal V}_1,Y^V]]
\\&=-[{\mathcal V}_1,[Y^V,{\mathcal V}_2]]+[{\mathcal
V}_2,[Y^V,{\mathcal V}_1]].\end{align*} By the above induction
result we know that both summands in the last equality are of type
\ref{VF4}. Then we are done.

Consider now the induction step. Assume it is true for $s\geq 3$
that the vector fields in $[Y^V,{\mathcal C}]$ can be rewritten as
linear combination of vector fields of type \ref{VF4}, let us prove
it for $s+1$.
\begin{align*}[Y^V,&\, {\mathcal V}_{i_1\dots i_{s+1}}]=\\& \,-[{\mathcal V}_{i_1\dots i_{s}},[Y^V,{\mathcal V}_{i_{s+1}}]]
-[{\mathcal V}_{i_1\dots i_{s+1}},Y^V]\\ &\, = -[{\mathcal
V}_{i_1\dots i_{s}},[Y^V,{\mathcal V}_{i_{s+1}}]] +[{\mathcal
V}_{i_{s+1}},[Y^V,{\mathcal V}_{i_1\dots i_{s}}]].\end{align*} Both
summands are of type \ref{VF4} because $[Y^V,{\mathcal V}_{s}]$, $[Y^V,{\mathcal V}_{i_1\dots i_{s}}]$ are
of type~\ref{VF4} by induction. Now the proof by induction is concluded.
\end{itemize}
\end{proof}
\end{thm}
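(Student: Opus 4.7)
The plan is to follow the normal-form reduction from \cite[Proposition 3.1]{LM97}, which says every vector field in ${\rm Lie}^{(\infty)}(Z,\mathscr{Y}^V)$ is a linear combination of nested brackets $[W_k,[W_{k-1},[\dots,[W_2,W_1]\dots]]]$ with each $W_i\in\{Z\}\cup\mathscr{Y}^V$. This reduces the proof to an induction on the bracket length $k$. The base case $k=1$ is immediate: $Z$ lies in type \ref{VF1} and $Y^V$ lies in type \ref{VF2}. For the inductive step, assuming the result for brackets of length $\le k$, any bracket of length $k+1$ is of the form $[W, L]$ with $W\in\{Z\}\cup\mathscr{Y}^V$ and $L$ a linear combination of elements of types \ref{VF1}--\ref{VF4}, so I only need to check that bracketing each of the four types against $Z$ and against an arbitrary $Y^V$ again lands in the union of the four types.

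The easy bullets are $[Z,\mathcal{A}]\subseteq\mathcal{B}$ and $[Z,\mathcal{B}]\subseteq\mathcal{B}$, both by the definition of $\mathcal{B}$, and $[Y^V,\mathcal{A}]=0$ since vertical lifts commute. The two key non-trivial identities I would lean on are (i) $[Y_2^V,[Z,Y_1^V]]=\langle Y_2:Y_1\rangle^V$, which converts vertical--$Z$--vertical brackets into vertical lifts of symmetric products and thereby keeps us inside $\mathcal{A}$, and (ii) the Jacobi identity, which lets me rewrite a bracket $[W,[U,V]]$ as $\pm[U,[W,V]]\pm[V,[U,W]]$ whenever one of the inner positions is more convenient. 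Using (ii) systematically, the case $[Z,\mathcal{C}]$ is handled by a sub-induction on the number $s$ of vertical factors appearing in a bracket of type $\mathcal{C}$: Jacobi turns $[Z,\mathcal{V}_{i_1\dots i_{s+1}}]$ into a sum of a type \ref{VF4} element and a bracket involving $[Z,\mathcal{V}_{i_1\dots i_s}]$, which is type \ref{VF4} by the sub-inductive hypothesis.

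The main obstacle is the case $[Y^V,\mathcal{B}]$ and, propagating from it, $[Y^V,\mathcal{C}]$. For $[Y^V,\mathcal{B}]$ I would compute $[Y^V,{\rm ad}_Z^\ell W^V]$ for $W^V\in\mathcal{A}$ by induction on $\ell$, using Jacobi at each step to produce a term of the form $[{\rm ad}_Z^{\ell-1}W^V,[Z,Y^V]]$ (type \ref{VF4}) and a recursive term $[Z,[Y^V,{\rm ad}_Z^{\ell-1}W^V]]$. The recursive term unwinds into a $\mathbb{Z}$-linear combination of brackets $[{\rm ad}_Z^{j}W^V,{\rm ad}_Z^{\ell-j}Y^V]$ (type \ref{VF4}) together with a residue ${\rm ad}_Z^{\ell-1}(\langle Y:W\rangle^V)$ coming from the base identity (i); the combinatorial coefficients $A_j^{(\ell)}$ satisfy a Pascal-like recursion and can be tracked explicitly. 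Finally, $[Y^V,\mathcal{C}]$ is handled by an outer induction on $s$: Jacobi gives
\begin{equation*}
[Y^V,\mathcal{V}_{i_1\dots i_{s+1}}]=-[\mathcal{V}_{i_1\dots i_s},[Y^V,\mathcal{V}_{i_{s+1}}]]+[\mathcal{V}_{i_{s+1}},[Y^V,\mathcal{V}_{i_1\dots i_s}]],
\end{equation*}
and both summands are of type \ref{VF4} by the $s=1$ case (just proved from the $[Y^V,\mathcal{B}]$ analysis) and by the outer inductive hypothesis.

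In short, the proof is a double induction: an outer induction on bracket length that reduces everything to computing $[W,T]$ with $W\in\{Z\}\cup\mathscr{Y}^V$ and $T$ of one of the four types, together with nested sub-inductions (on the number of vertical factors, and on the power of ${\rm ad}_Z$) driven by Jacobi and by the symmetric-product identity $[Y^V,[Z,W^V]]=\langle Y:W\rangle^V$. The delicate point is bookkeeping in the $[Y^V,{\rm ad}_Z^\ell W^V]$ expansion; everything else is a repeated mechanical application of Jacobi to relocate the $Z$'s and $Y^V$'s into the innermost position so that the pattern of a type \ref{VF3} or \ref{VF4} element can be recognized.
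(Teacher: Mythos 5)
Your proposal is correct and follows essentially the same route as the paper: the same reduction via \cite[Proposition 3.1]{LM97} to nested brackets, the same outer induction on bracket length reducing to bracketing the four types against $Z$ and $Y^V$, and the same Jacobi-driven sub-inductions (on the number of vertical factors and on the power of ${\rm ad}_Z$, with the Pascal-like coefficients and the residue ${\rm ad}_Z^{\ell-1}(\langle Y\colon W\rangle^V)$) for the cases $[Z,\mathcal{C}]$, $[Y^V,\mathcal{B}]$ and $[Y^V,\mathcal{C}]$. The only difference is organizational, namely where the explicit expansion of $[Y^V,{\rm ad}_Z^{\ell}W^V]$ is placed, so no further comparison is needed.
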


Immediate from this result, we have the following description of the
accessibility algebra at some points with non-zero velocity.
\begin{corol}
If $v_q\in {\rm Sym}^{(\infty)}(\mathscr Y)$, then
\[{\rm Lie}^{(\infty)}(Z,\mathscr{Y}^V)_{v_q}\subseteq T_{v_q} {\rm Sym}^{(\infty)}(\mathscr
Y).\] \label{Corol-Subset}
\end{corol}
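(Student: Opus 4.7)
The plan is to deduce the inclusion directly from Theorem~\ref{Prop-Type-VF}, which expresses every element of ${\rm Lie}^{(\infty)}(Z,\mathscr{Y}^V)_{v_q}$ as a linear combination of the four types of tangent vectors \ref{VF1}--\ref{VF4}. Writing $\mathcal{D}={\rm Sym}^{(\infty)}(\mathscr{Y})$ and viewing this distribution as a submanifold of $TQ$, it suffices to show that every vector field appearing in the four families is tangent to $\mathcal{D}$ at points of $\mathcal{D}$; then, since the set of vector fields tangent to a fixed submanifold is closed under $\mathbb{R}$-linear combinations and under the Lie bracket, the conclusion will follow at once.

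First I would handle type \ref{VF1}. By hypothesis $\nabla$ restricts to $\mathcal{D}$, so by the final observation of Section~\ref{Sec:NablaRestricted} the distribution $\mathcal{D}$ is geodesically invariant, meaning that $\mathcal{D}$ (as a submanifold of $TQ$) is invariant under the geodesic spray $Z$. Hence $Z_{v_q}\in T_{v_q}\mathcal{D}$ for every $v_q\in\mathcal{D}$. Next, for type \ref{VF2}, if $Y\in\Gamma^\omega(\mathcal{D})$ and $v_q\in\mathcal{D}_q$, the vertical lift
\begin{equation*}
Y^V_{v_q}=\left.\frac{d}{dt}\right|_{t=0}\bigl(v_q+tY(q)\bigr)
\end{equation*}
is tangent to the fiber $\mathcal{D}_q\subset\mathcal{D}$, since the curve lies entirely in the linear subspace $\mathcal{D}_q$. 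Thus $Y^V_{v_q}\in T_{v_q}\mathcal{D}$, and $\mathcal{A}_{v_q}\subset T_{v_q}\mathcal{D}$.

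Once types \ref{VF1} and \ref{VF2} are known to be tangent to $\mathcal{D}$, types \ref{VF3} and \ref{VF4} require no new work: iterated brackets ${\rm ad}_Z^l(Y^V)$ and the involutive closure appearing in $\mathcal{C}$ are built out of $Z$ and elements of $\mathcal{A}$, and vector fields tangent to $\mathcal{D}$ form a Lie subalgebra. Combining with Theorem~\ref{Prop-Type-VF} yields ${\rm Lie}^{(\infty)}(Z,\mathscr{Y}^V)_{v_q}\subseteq T_{v_q}\mathcal{D}$. There is no serious obstacle: the technical content has already been absorbed into Theorem~\ref{Prop-Type-VF} and into the implication \emph{restricted connection} $\Rightarrow$ \emph{geodesic invariance}. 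The only subtlety worth flagging is the need to regard $\mathcal{D}$ as a submanifold of $TQ$ so that the tangent spaces $T_{v_q}\mathcal{D}$ are well defined; the standing analyticity hypothesis ensures that $\mathcal{D}$ has the regularity needed for this interpretation.
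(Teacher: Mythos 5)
Your overall route is the same as the paper's: invoke Theorem~\ref{Prop-Type-VF}, check that each of the four families of generators is tangent to ${\rm Sym}^{(\infty)}(\mathscr{Y})$ viewed as a submanifold of $TQ$, and then close up under linear combinations and Lie brackets. Your handling of types~\ref{VF2}, \ref{VF3} and \ref{VF4} (vertical lifts tangent to the fibers, and tangency preserved by bracketing with $Z$ and among themselves) matches the paper's argument.

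There is, however, one genuine defect in your treatment of type~\ref{VF1}: you write ``by hypothesis $\nabla$ restricts to $\mathcal{D}$'' and deduce geodesic invariance from the observation at the end of Section~\ref{Sec:NablaRestricted}. Corollary~\ref{Corol-Subset} carries no such hypothesis --- its only assumption is $v_q\in{\rm Sym}^{(\infty)}(\mathscr{Y})$; the restriction hypothesis enters only later, in Propositions~\ref{Prop:TD-ifDrestrics}--\ref{Corol-AccDistrNonZero} and Theorem~\ref{Thm-AccDistrNonZero}. As written, your argument only proves the corollary under an extra assumption. The repair is immediate and is exactly what the paper does: ${\rm Sym}^{(\infty)}(\mathscr{Y})$ is by construction closed under the symmetric product, so by the criterion recalled in Section~\ref{Sec:NablaRestricted} (a distribution is geodesically invariant if and only if it is closed under the symmetric product, \cite[Theorem 5.4]{98Lewis}) it is geodesically invariant unconditionally, and hence the geodesic spray $Z$ is tangent to it along ${\rm Sym}^{(\infty)}(\mathscr{Y})$. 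With that substitution your proof coincides with the paper's; your closing remark on the regularity needed to speak of $T_{v_q}{\rm Sym}^{(\infty)}(\mathscr{Y})$ is at the same level of care as the paper itself, which treats this point implicitly.
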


\begin{proof}
Note that
\[{\rm Lie}^{(\infty)}(Z,\mathscr{Y}^V)_{v_q}={\rm
Lie}^{(\infty)}(Z,({\rm Sym}^{(\infty)}(\mathscr{Y}))^V)_{v_q},\]
because
\[\mathscr{Y}_q\subseteq  {\rm Sym}^{(\infty)}(\mathscr{Y})_q\] and \[{\rm Sym}^{(\infty)}(\mathscr{Y})^V_{v_q}\subseteq  {\rm
Lie}^{(\infty)}(Z,\mathscr{Y}^V)_{v_q}\] since $\langle Y_a\colon
Y_b \rangle^V=[Y_a^V,[Z,Y_b^V]]$.

As ${\rm Sym}^{(\infty)}(\mathscr Y)$ is geodesically invariant by
definition, the geodesic spray is tangent to ${\rm
Sym}^{(\infty)}(\mathscr Y)$, cf. Section~\ref{Sec:NablaRestricted}\@. Thus, on ${\rm Sym}^{(\infty)}(\mathscr
Y)$ all the vector fields of type \ref{VF3} in Theorem
\ref{Prop-Type-VF} are also tangent to ${\rm
Sym}^{(\infty)}(\mathscr Y)$. Then all the vector fields of type
\ref{VF4} in Theorem \ref{Prop-Type-VF} are also tangent to ${\rm
Sym}^{(\infty)}(\mathscr Y)$ because of the definition of Lie
bracket.
\end{proof}

\section{Description of the accessibility algebra} \label{Sec:Description}

We are going to introduce here how to compute inductively and explicitly the vector
fields in ${\rm Lie}^{\infty}(Z, {\mathscr{Y}}^V)$ since they would
be necessary later on. At zero velocity there are many objects that
vanish as described in~\cite{LM97}, but at non-zero velocity the
computations become more involved and it is useful to consider
objects defined along the projection map $\tau_Q \colon TQ
\rightarrow Q$ as described in~\cite{Eduardo1,Eduardo2}.

Given a connection $\nabla$ on $Q$, at any $v_q\in TQ$ there exists
a pointwise splitting of $T_{v_q}TQ$ analogous to the one in
(\ref{Eq:Split-0q}).
\begin{equation}\label{Eq-Split-vq} T_{v_q}TQ \simeq H_{v_q}(TQ)\oplus
V_{v_q}(TQ)\simeq T_q Q \oplus T_qQ.\end{equation} For any vector
field on $TQ$, the horizontal and the vertical projector at $v_q\in TQ$ are denoted
by ${\rm hor}_{v_q}\colon T_{v_q}TQ \rightarrow H_{v_q}(TQ)\simeq
T_qQ$, ${\rm ver}_{v_q}\colon T_{v_q}TQ \rightarrow V_{v_q}(TQ)
\simeq T_qQ$, respectively.

Remember that the \textbf{vertical lift} of a vector field $X$ on
$Q$ is the vector field $X^V$ on $TQ$ defined by $X^V(v_q)={\rm
vlft}_{v_q}(X(q))$, where ${\rm vlft}_{v_q}\colon T_qQ \rightarrow
T_{v_q}TQ$ is given by \begin{equation*}{\rm
vlft}_{v_q}(X_q)=\ds{\left.\frac{{\rm d}}{{\rm d}t}\right|_{t=0}
(v_q+tX_q)}\end{equation*} for any $v_q\in TQ$.

The \textbf{horizontal lift} of the tangent vector $v_q\in T_qQ$ at
point $q\in Q$ is the tangent vector in $T_{v_q}TQ$ given by
\begin{equation*} {\rm
hlft}_q(v_q)=(T_{v_q}\tau_Q|H_{v_q}TQ)^{-1}(v_q).\end{equation*}
Then, the \textbf{horizontal lift of the vector field} $X\in
\Gamma^\omega(TQ)$ is the vector field $X^H$ in $\Gamma^\omega(TTQ)$
defined by $X^H(v_q)={\rm hlft}_q(X(\tau_Q(v_q)))$.

Locally, if $X=X^i\partial /
\partial q^i$, then
\begin{align*}\ds{X^H(v_q)}&=\ds{X^i(q)\left(\frac{\partial}{\partial
q^i}-\Gamma^j_{ik}(q)v^k \frac{\partial}{\partial v^j}\right),}\\
\ds{X^V(v_q)}&=\ds{X^i(q)\frac{\partial}{\partial v^i}}.\end{align*}

It is possible to rewrite Lie brackets in ${\rm Lie}^{(\infty)}(Z,\mathscr{Y}^V)$
in terms of the splitting in (\ref{Eq-Split-vq}). The computation of
the vector fields in ${\rm Lie}^{(\infty)}(Z,\mathscr{Y}^V)$ can be
restricted to the brackets of the form
\begin{equation*}
 [X_k,[X_{k-1},[\dots,[X_2,X_1]\dots]]]
\end{equation*}
cf. \cite[Proposition 3.1]{LM97}. For vector fields $X$ and $W$ on $Q$, it can be computed that
\begin{align} [Z,X^H+W^V]&=(\nabla_vX-W)\oplus(-R(X,v)v+\nabla_vW), \label{Eq-Zbracket}\\
[Y^V,X^H+W^V]&=0\oplus (-\nabla_XY) \label{Eq-YVbracket},
\end{align} where $R$ is the curvature tensor associated with
$\nabla$ and $v$ denotes the vector field ${\rm Id}\colon TQ \rightarrow TQ$ defined along the projection $\tau_Q\colon TQ\rightarrow Q$.

The horizontal and vertical lift are defined analogously for vector
fields $X$ defined along the projection $\tau_Q\colon TQ\rightarrow
Q$, that is, $\tau_Q\circ X=\tau_Q$.  The vector fields with the
smallest length in ${\rm Lie}^{(\infty)}(Z,\mathscr{Y}^V)$ can be rewritten as follows
\begin{align} [Z,Y^V]&=-Y\oplus \nabla_vY, \label{Eq-ZYV}\\ [Y_1^V,[Z,Y_2^V]]&=0\oplus \langle Y_1 \colon Y_2 \rangle
\nonumber, \\
[Z,[Z,Y^V]]&=-2\nabla_vY \nonumber \\ & \oplus (R(Y,v)v+{\rm
ver}(\nabla^H_{v^H}(\nabla_vY)^V)) \label{Eq-ZZYV},
\end{align} where $\nabla^H$ denotes the horizontal lift of a
connection defined on \cite{Yano}. Note that for coordinates
$(q^i,v^i)$ the only non-zero Christoffel symbols
${}^H\Gamma^j_{ji}$ of $\nabla^H$ needed for our computations are:
\begin{equation}\label{eq:Gamma-Nabla-H}
 {}^H\Gamma_{j\overline{i}}^{\overline{l}}={}^H\Gamma_{\overline{j}i}^{\overline{l}}=\Gamma^l_{ji},
\end{equation}
where $i$ is the index corresponding to $q^i$ and $\overline{i}$ is
the index corresponding to $v^i$. From here it easily follows
$\nabla^H_{X^H}Y^V(v_q) \in V_{v_q}TQ$.

Note that in (\ref{Eq-ZYV}) and (\ref{Eq-ZZYV}) there are vector
fields depending on the velocities. To compute the iterated Lie
bracket from here we must use
\begin{align} [Z,X^H+W^V](v_q)&=({\rm ver}_{v_q}(\nabla^H_{v^H}X^V)-W) \nonumber \\&\oplus(-R(X,v)v+{\rm ver}_{v_q}(\nabla^H_{v^H}W^V)),
\label{Eq-ZbracketVeloc}\\
[Y^V,X^H+W^V](v_q)&={\rm ver}_{v_q}(\nabla^H_{Y^V}X^V) \nonumber \\&\oplus
({\rm ver}_{v_q}(\nabla^H_{Y^V}W^V+\nabla^H_{X^H}Y^V))
\label{Eq-YVbracketVeloc}.
\end{align}
instead of (\ref{Eq-Zbracket}) and (\ref{Eq-YVbracket}).
Observe that in~\eqref{Eq-ZbracketVeloc} and~\eqref{Eq-YVbracketVeloc} the vertical
projector is always acting over vertical vector fields because of
the definition of the horizontal lift $\nabla^H$ of $\nabla$.

\subsection{On the tangent space to a distribution}\label{Sub:TangentD}

It would be very useful to have a decomposition of the tangent space
to a distribution in terms of the horizontal and vertical subspace.
However this is only possible at points with zero velocity or if the
connection restricts to the distribution, cf.
Section~\ref{Sec:NablaRestricted}, as proved in the following
proposition.

\begin{prop}
If $\nabla$ restricts to a distribution $\mathcal{D}$, then
\begin{align*}
{\rm T}_{v_q}\mathcal{D} \cap H_{v_q}(TQ)  & \simeq \, T_qQ,\\
{\rm T}_{v_q}\mathcal{D}
\cap V_{v_q}(TQ)& \simeq \, \mathcal{D}_q, 
\end{align*}
for every $v_q\in \mathcal{D}$.
\label{Prop:TD-ifDrestrics}

\begin{proof}
%
Let $k$ be the rank of the distribution $\mathcal{D}$, consider a
$\mathcal{D}$-adapted basis $\{Y_1,\dots,Y_n\}$ on $Q$ such that
\begin{equation*}
 {\rm span}_{\mathbb{R}}\{ Y_1(q),\dots, Y_k(q)\}=\mathcal{D}_q,\quad \mbox{for every } q\in Q.
\end{equation*}

The Christoffel symbols associated with the $\mathcal{D}$-adapted basis $\{Y_1,\dots,Y_n\}$
are given by
\begin{equation*}
 \nabla_{Y_i}Y_j=\Gamma^l_{ij}Y_l.
\end{equation*}

As $\nabla$ restricts to $\mathcal{D}$, we have
\begin{equation*}
 \Gamma_{ia}^\alpha=0
\end{equation*}
for $\alpha=k+1,\dots,n$; $a=1,\dots,k$; $i=1,\dots,n$.
Then the basis of the horizontal subspace of $T_{v_q}TQ$ on $\mathcal{D}$ is tangent to $\mathcal{D}$ since
\begin{equation*}
 Y_i^H=Y_i-\Gamma_{ij}^lv^jY_l^V=Y_i-\Gamma_{ib}^cv^bY_c^V,
\end{equation*}
for $i=1,\dots,n$; $b,c=1,\dots,k$.

The tangency of vector fields on $TQ$ can be intrinsically characterized as follows. We let $\pi_{\mathcal{D}}\colon
TQ\rightarrow TQ/\mathcal{D}$ be the canonical projection such that the following diagram commutes
\begin{equation*}
\xymatrix{{TQ}\ar[rr]^{\pi_{\mathcal{D}}}\ar[rd]_{\tau_Q}&&
TQ/\mathcal{D}\ar[ld]^{\tau_{\mathcal{D}}}\\&Q&}
\end{equation*}

At $0_q\in TQ/\mathcal{D}$\@, there exists the following natural splitting
\begin{equation}\label{eq:Split-0}
T_{0_q}TQ/\mathcal{D}\simeq T_qQ \oplus (T_qQ/\mathcal{D}_q),
\end{equation}
cf.~\cite[Lemma~6.33]{AndrewBook}\@.  Hence we define the projection
$\pi_2\colon T_{0_q}TQ/\mathcal{D} \rightarrow T_qQ/\mathcal{D}_q$ onto the
second component of the splitting in~\eqref{eq:Split-0}\@.  
The vector field $Y_i^H$ on $TQ$ is tangent to $\mathcal{D}$
if and only if
\begin{equation*}
\pi_2((T_{v_q}\pi_{\mathcal{D}}\circ Y_i^H)(v_q))=0_q
\end{equation*}
for every\/ $v_q\in\mathcal{D}$\@. This equation is true because it is well-known that
$Y_c^V$ is tangent to $\mathcal{D}$ for $c=1,\dots,k$. The result follows from here.
%
\end{proof}
\end{prop}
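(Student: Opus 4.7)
The plan is to pick a $\mathcal{D}$-adapted local frame, read off the vanishing of the relevant Christoffel symbols, and then exhibit an $n$-dimensional subspace of horizontal vectors plus a $k$-dimensional subspace of vertical vectors, both lying in $T_{v_q}\mathcal{D}$, and conclude by dimension count.

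First, I would fix an analytic local frame $\{Y_1,\dots,Y_n\}$ near $q$ such that $\{Y_1,\dots,Y_k\}$ spans $\mathcal{D}$, and write $\nabla_{Y_i}Y_j=\Gamma^l_{ij}Y_l$. The hypothesis that $\nabla$ restricts to $\mathcal{D}$ gives $\nabla_{Y_i}Y_a\in\Gamma^{\omega}(\mathcal{D})$ for $a\le k$, which forces
\begin{equation*}
\Gamma^{\alpha}_{ia}=0,\qquad \alpha=k+1,\dots,n,\ a=1,\dots,k,\ i=1,\dots,n.
\end{equation*}
At any $v_q\in\mathcal{D}$ the coordinates of $v$ in this frame satisfy $v^{\alpha}=0$ for $\alpha>k$. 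Using the local expression $Y_i^H=Y_i-\Gamma^l_{ij}v^jY_l^V$ and the vanishing above, one gets
\begin{equation*}
Y_i^H(v_q)=Y_i(q)-\Gamma^c_{ib}(q)v^bY_c^V(v_q),\qquad b,c=1,\dots,k,
\end{equation*}
so only vertical directions inside $\mathcal{D}_q$ appear.

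Next I would verify intrinsically that each $Y_i^H$ is tangent to the submanifold $\mathcal{D}\subset TQ$ using the criterion set up just before the statement: $X\in\Gamma^{\omega}(TTQ)$ is tangent to $\mathcal{D}$ at $v_q$ iff $\pi_2(T_{v_q}\pi_{\mathcal{D}}(X(v_q)))=0_q$ in $T_qQ/\mathcal{D}_q$. The vertical pieces $Y_c^V$ with $c\le k$ are clearly tangent to $\mathcal{D}$; the base contribution $Y_i(q)$ is mapped by $T\pi_{\mathcal{D}}$ into the horizontal $T_qQ$-summand of the splitting $T_{0_q}(TQ/\mathcal{D})\simeq T_qQ\oplus(T_qQ/\mathcal{D}_q)$, hence has trivial $\pi_2$-image. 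This shows $Y_i^H(v_q)\in T_{v_q}\mathcal{D}$ for every $i=1,\dots,n$.

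Finally I would close the argument by dimension. Since $\dim T_{v_q}\mathcal{D}=n+k$, since the $n$ horizontal vectors $\{Y_i^H(v_q)\}_{i=1}^n$ and the $k$ vertical vectors $\{Y_c^V(v_q)\}_{c=1}^k$ all lie in $T_{v_q}\mathcal{D}$, and since $H_{v_q}(TQ)\cap V_{v_q}(TQ)=\{0\}$, the two sets together form a basis of $T_{v_q}\mathcal{D}$ compatible with the splitting $T_{v_q}TQ=H_{v_q}(TQ)\oplus V_{v_q}(TQ)$. Under the canonical identifications $H_{v_q}(TQ)\simeq T_qQ$ and $V_{v_q}(TQ)\simeq T_qQ$ this yields exactly $T_{v_q}\mathcal{D}\cap H_{v_q}(TQ)\simeq T_qQ$ and $T_{v_q}\mathcal{D}\cap V_{v_q}(TQ)\simeq\mathcal{D}_q$.

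The main obstacle I expect is justifying that the horizontal lift $Y_i^H$ is tangent to $\mathcal{D}$ even when $i>k$, i.e.\ when $Y_i$ itself is transverse to $\mathcal{D}$: this is exactly where the restriction hypothesis is essential, because without the vanishing of $\Gamma^{\alpha}_{ib}$ the correction term $-\Gamma^l_{ib}v^bY_l^V$ would contribute a vertical component outside $\mathcal{D}_q$, breaking tangency and spoiling the horizontal piece of the splitting.
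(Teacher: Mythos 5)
Your proposal is correct and follows essentially the same route as the paper's proof: a $\mathcal{D}$-adapted frame, the vanishing $\Gamma^{\alpha}_{ia}=0$ forced by the restriction hypothesis, the resulting expression $Y_i^H=Y_i-\Gamma^c_{ib}v^bY_c^V$ on $\mathcal{D}$, and the intrinsic tangency criterion via $\pi_{\mathcal{D}}$ and the splitting $T_{0_q}(TQ/\mathcal{D})\simeq T_qQ\oplus(T_qQ/\mathcal{D}_q)$. The only difference is that you make explicit the final dimension count identifying the horizontal and vertical intersections, which the paper compresses into ``the result follows from here.''
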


Now, under suitable assumptions, we are going to describe
intrinsically the distribution of vector fields of type~\ref{VF3} in
Theorem~\ref{Prop-Type-VF}. First we need the following result.

\begin{prop}
If $\nabla$ restricts to a distribution $\mathcal{D}$, then
$\nabla^H$ restricts to the distribution $\mathcal{D}^V$.
\label{Prop:NablaHRestrictD}

\begin{proof}
It follows from~\eqref{eq:Gamma-Nabla-H} and from considering a
$\mathcal{D}$-adapted basis on $Q$. To be more precise, for any
section $C^aY_a^V$ of $\mathcal{D}^V$
\begin{align*}
 \nabla^H_{A^iY_i+B^iY_i^V}(C^aY_a^V)&= \, (A^iY_i+B^iY_i^V)(C^a)Y_a^V\\ &+\, \Gamma^a_{ic}A^iC_cY_a^V \in \Gamma^\infty(\mathcal{D}^V).
\end{align*}~\end{proof}
\end{prop}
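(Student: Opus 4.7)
The plan is to reduce the statement to a local computation in a $\mathcal{D}$-adapted frame on $Q$, and then exploit the explicit formula~\eqref{eq:Gamma-Nabla-H} for the Christoffel symbols of $\nabla^{H}$ together with the restriction hypothesis on $\nabla$. First I would pick an analytic local frame $\{Y_1,\dots,Y_n\}$ on $Q$ such that $\{Y_1,\dots,Y_k\}$ spans $\mathcal{D}$, so that an arbitrary local section of $\mathcal{D}^{V}$ takes the form $W = C^{a}Y_a^{V}$ with the summation index $a$ restricted to $\{1,\dots,k\}$. Since $\{Y_i^{H},Y_i^{V}\}_{i=1,\dots,n}$ is a local frame for $TTQ$, I would then expand a generic argument vector field on $TQ$ as $X = A^{i}Y_i^{H}+B^{i}Y_i^{V}$, where $A^i$ and $B^i$ are functions on $TQ$.

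Next I would apply $\mathbb{R}$-linearity and the Leibniz rule for $\nabla^{H}$. This splits $\nabla^{H}_{X}W$ into a derivation part $X(C^{a})Y_a^{V}$, which is manifestly in $\mathcal{D}^{V}$ since only $a\in\{1,\dots,k\}$ appear, and a tensorial part $C^{a}(A^{i}\nabla^{H}_{Y_i^{H}}Y_a^{V}+B^{i}\nabla^{H}_{Y_i^{V}}Y_a^{V})$. For the tensorial part, the formula~\eqref{eq:Gamma-Nabla-H} shows that the only non-zero Christoffel symbols of $\nabla^{H}$ have an upper barred index and coincide with $\Gamma^{l}_{ji}$ of $\nabla$. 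Reading this off in the adapted frame, I would verify the two identities
\begin{equation*}
\nabla^{H}_{Y_i^{H}}Y_a^{V}=\Gamma^{l}_{ia}Y_l^{V},\qquad
\nabla^{H}_{Y_i^{V}}Y_a^{V}=0,
\end{equation*}
where $\Gamma^{l}_{ia}$ are the Christoffel symbols of $\nabla$ in the adapted frame.

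The key step is now to invoke the hypothesis that $\nabla$ restricts to $\mathcal{D}$, which, as already used in the proof of Proposition~\ref{Prop:TD-ifDrestrics}, forces $\Gamma^{\alpha}_{ia}=0$ for every $\alpha\in\{k+1,\dots,n\}$, $i\in\{1,\dots,n\}$, and $a\in\{1,\dots,k\}$. Consequently the sum $\Gamma^{l}_{ia}Y_l^{V}$ collapses to $\Gamma^{c}_{ia}Y_c^{V}$ with $c\in\{1,\dots,k\}$, which is a section of $\mathcal{D}^{V}$. Combining the derivation and tensorial contributions, $\nabla^{H}_{X}W$ is a section of $\mathcal{D}^{V}$, which is the desired restriction property for $\nabla^{H}$.

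I do not anticipate a serious conceptual obstacle here; the statement is essentially a bookkeeping consequence of Proposition~\ref{Prop:TD-ifDrestrics} and the explicit structure of $\nabla^{H}$. The main care required is in matching the barred versus unbarred index conventions for~\eqref{eq:Gamma-Nabla-H} against the adapted frame, so that the mixed derivatives $\nabla^{H}_{Y_i^{H}}Y_a^{V}$ and $\nabla^{H}_{Y_i^{V}}Y_a^{V}$ are read off correctly. Once that is done, the conclusion is automatic from the vanishing of $\Gamma^{\alpha}_{ia}$ for $\alpha>k$.
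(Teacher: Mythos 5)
Your proposal is correct and follows essentially the same route as the paper: expand in a $\mathcal{D}$-adapted frame, apply the Leibniz rule, and use~\eqref{eq:Gamma-Nabla-H} together with the vanishing of $\Gamma^{\alpha}_{ia}$ (from $\nabla$ restricting to $\mathcal{D}$) to see that only $\mathcal{D}^{V}$-components survive. Your version merely spells out the intermediate identities $\nabla^{H}_{Y_i^{H}}Y_a^{V}=\Gamma^{l}_{ia}Y_l^{V}$ and $\nabla^{H}_{Y_i^{V}}Y_a^{V}=0$ that the paper's one-line computation uses implicitly.
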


\begin{prop}
 If $\nabla$ restricts to a distribution $\mathcal{D}$, then the smallest
distribution $\mathcal{D}^Z$ invariant under the geodesic spray
that contains $\mathcal{D}$ satisfies
\[\mathcal{D}^Z(v_q)\simeq \mathcal{D}_q\oplus \mathcal{D}_q\]
for every $v_q \in \mathcal{D}$.
\label{Corol-Zinv}

\begin{proof}
Let us prove it by induction on the number $s$ of iterated Lie brackets with the geodesic spray.

For $s=1$, see~\eqref{Eq-ZYV}. As $\nabla$ restricts to $\mathcal{D}$, the vertical
component of~\eqref{Eq-ZYV} sits in $\mathcal{D}$.

For $s=2$, see~\eqref{Eq-ZZYV}. As $\nabla$ restricts to
$\mathcal{D}$, the curvature tensor leaves $\mathcal{D}$ invariant,
cf. Proposition~\ref{Prop:Andrew98R}. Using also
Proposition~\ref{Prop:NablaHRestrictD} we have $[Z,[Z,Y^V]](v_q)\in
\mathcal{D}_q\oplus \mathcal{D}_q$ for every $v_q\in \mathcal{D}$.

Assume the result is true for $s$, let us prove it for $s+1$. Consider the following Lie bracket
\begin{align*}
 [Z,X^H+W^V](v_q)&=\, (\rm{ver}_{v_q}(\nabla^H_{v^H}X^V)-W) \\ &\oplus \, (-R(X,v)v+\rm{ver}_{v_q}(\nabla^H_{v^H}W^V)),
\end{align*}
where $X|_{\mathcal{D}}$, $W|_{\mathcal{D}}$ are
$\mathcal{D}$-valued vector fields along $\tau_Q$ and $\rm{ver}_{v_q}$ is
the projection onto the vertical subspace according to the splitting
in~\eqref{Eq-Split-vq}. Thus the vertical lifts of $X$ and $W$ are
sections of $\mathcal{D}^V$. By
Proposition~\ref{Prop:NablaHRestrictD} we conclude that both
$\nabla^H_{v^H}X^V$ and $\nabla^H_{v^H}W^V$ are vector fields
tangent to $\mathcal{D}$. Moreover they are vertical vector fields,
then their vertical projections are sections of
$\mathcal{D}$. As the curvature tensor leaves $\mathcal{D}$
invariant, cf. Proposition~\ref{Prop:Andrew98R}, the result follows.
\end{proof}

\end{prop}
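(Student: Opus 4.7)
The plan is to establish the claimed decomposition by induction on the number $s$ of iterated brackets with the geodesic spray applied to a vertical lift. Since $\mathcal{D}^Z$ is, by its construction as the smallest $Z$-invariant distribution containing $\mathcal{D}^V$, spanned pointwise by the family $\{\mathrm{ad}_Z^s(Y^V) \mid Y \in \Gamma^\omega(\mathcal{D}),\; s\geq 0\}$, it suffices to show that each such generator, evaluated at $v_q\in\mathcal{D}$, decomposes as an element of $\mathcal{D}_q\oplus\mathcal{D}_q$ under the horizontal--vertical splitting~\eqref{Eq-Split-vq}.

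The base cases $s=0$ and $s=1$ are immediate: $Y^V(v_q)$ corresponds to $0\oplus Y(q)$, while equation~\eqref{Eq-ZYV} gives $[Z,Y^V](v_q)=-Y\oplus\nabla_vY$, whose vertical component lies in $\mathcal{D}_q$ because $\nabla$ restricts to $\mathcal{D}$ and $v\in\mathcal{D}$. For the inductive step, suppose $\mathrm{ad}_Z^s(Y^V)(v_q)=X\oplus W$ with $X$ and $W$ both $\mathcal{D}$-valued along $\tau_Q$ on $\mathcal{D}$. I then apply the general bracket formula~\eqref{Eq-ZbracketVeloc} to $[Z,X^H+W^V]$, whose horizontal component is $\mathrm{ver}_{v_q}(\nabla^H_{v^H}X^V)-W$ and whose vertical component is $-R(X,v)v+\mathrm{ver}_{v_q}(\nabla^H_{v^H}W^V)$. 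Three ingredients close the induction: Proposition~\ref{Prop:Andrew98R} guarantees $R(X,v)v\in\mathcal{D}_q$; Proposition~\ref{Prop:NablaHRestrictD} guarantees that $\nabla^H$ restricts to $\mathcal{D}^V$, so both $\nabla^H_{v^H}X^V$ and $\nabla^H_{v^H}W^V$ are $\mathcal{D}^V$-valued; and since these are already vertical vector fields, their vertical projections land directly in $\mathcal{D}_q$. Combined with $W\in\mathcal{D}_q$ from the inductive hypothesis, both components of $[Z,X^H+W^V](v_q)$ sit in $\mathcal{D}_q$, closing the induction.

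For the reverse inclusion, the vertical slot $\{0\}\oplus\mathcal{D}_q$ is reached by $Y^V(v_q)$ as $Y$ ranges over sections of $\mathcal{D}$, and the horizontal slot is obtained by subtracting the vertical generator $(\nabla_vY)^V(v_q)=0\oplus\nabla_vY$ from $[Z,Y^V](v_q)=-Y\oplus\nabla_vY$ to produce $-Y\oplus 0$. The main obstacle is ensuring the horizontal--vertical structure of $\mathcal{D}_q\oplus\mathcal{D}_q$ is preserved through each bracket with $Z$; this is precisely what Proposition~\ref{Prop:NablaHRestrictD} purchases, and without its restriction statement for $\nabla^H$ the inductive step would not close because the terms $\mathrm{ver}_{v_q}(\nabla^H_{v^H}X^V)$ and $\mathrm{ver}_{v_q}(\nabla^H_{v^H}W^V)$ could a priori escape $\mathcal{D}_q$.
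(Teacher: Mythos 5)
Your proposal is correct and follows essentially the same route as the paper: induction on the number $s$ of brackets with $Z$, the bracket formula~\eqref{Eq-ZbracketVeloc} for $[Z,X^H+W^V]$, and the two key ingredients Proposition~\ref{Prop:Andrew98R} (curvature invariance of $\mathcal{D}$) and Proposition~\ref{Prop:NablaHRestrictD} ($\nabla^H$ restricts to $\mathcal{D}^V$). The only difference is that you also spell out the reverse inclusion (spanning $\mathcal{D}_q\oplus\mathcal{D}_q$ via $Y^V$ and $[Z,Y^V]$ minus its vertical part), a point the paper leaves implicit, which is a welcome completion rather than a different argument.
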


\begin{prop}
If $\nabla$ restricts to ${\rm Sym}^{(\infty)}(\mathscr Y)$,
then
\begin{align}
{\rm Sym}^{(\infty)}(\mathscr Y)_q & \simeq \, {\rm Lie}^{(\infty)}(Z,\mathscr{Y}^V)_{v_q} \cap V_{v_q}(TQ),
\label{eq:AccAlgVert}\\
{\rm Lie}^{(\infty)}({\rm Sym}^{(\infty)}(\mathscr Y))_q & \subseteq \, {\rm Lie}^{(\infty)}(Z,\mathscr{Y}^V)_{v_q}
\cap H_{v_q}(TQ), \label{eq:AccAlgHor}
\end{align}
for every $v_q\in {\rm Sym}^{(\infty)}(\mathscr Y)$.
\label{Corol-AccDistrNonZero}

\begin{proof}
Remember that
\begin{equation*} {\rm Lie}^{(\infty)}(Z,\mathscr{Y}^V)_{v_q}= {\rm Lie}^{(\infty)}(Z,{\rm Sym}^{(\infty)}(\mathscr
Y)^V)_{v_q}.
\end{equation*}
The hypothesis, Corollary~\ref{Corol-Subset}  and Proposition~\ref{Prop:TD-ifDrestrics}
leads to
\begin{align*}
{\rm Lie}^{(\infty)}(Z,{\rm Sym}^{(\infty)}(\mathscr Y)^V)_{v_q}& \subseteq \, T_{v_q}{\rm Sym}^{(\infty)}(\mathscr Y)\\
& \simeq \,
T_qQ \oplus {\rm Sym}^{(\infty)}(\mathscr Y)_q.
\end{align*}
Since ${\rm Sym}^{(\infty)}(\mathscr Y)^V_{v_q}$ sits in ${\rm Lie}^{(\infty)}(Z,{\rm Sym}^{(\infty)}(\mathscr Y)^V)_{v_q}$
for every $v_q\in TQ$,
\begin{equation*}
 {\rm Sym}^{(\infty)}(\mathscr Y)_q  \simeq  {\rm Lie}^{(\infty)}(Z,\mathscr{Y}^V)_{v_q} \cap V_{v_q}(TQ).
\end{equation*}

To prove~\eqref{eq:AccAlgHor} first note that by Theorem~\ref{Prop-Type-VF} and Proposition~\ref{Corol-Zinv}
\begin{equation*}
 {\rm Sym}^{(\infty)}(\mathscr Y)_q  \subseteq  {\rm Lie}^{(\infty)}(Z,\mathscr{Y}^V)_{v_q} \cap H_{v_q}(TQ).
\end{equation*}
Using~\eqref{Eq-ZYV} and~\eqref{Eq-Zbracket}, for every $Y_1$, $Y_2\in \Gamma^\omega({\rm Sym}^{(\infty)}(\mathscr Y))$ we have
\begin{equation*}
 [[Z,Y_1^V],[Z,Y_2^V]](v_q)=[Y_1,Y_2]_q\oplus W_q,
\end{equation*}
for every $v_q\in {\rm Sym}^{(\infty)}(\mathscr Y)$ where $W\in \Gamma^\omega({\rm Sym}^{(\infty)}(\mathscr Y))$ because
of~\eqref{eq:AccAlgVert}. From here~\eqref{eq:AccAlgHor} follows.
\end{proof}

\end{prop}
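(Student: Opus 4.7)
The plan is to establish the two inclusions separately, invoking Corollary~\ref{Corol-Subset}, Proposition~\ref{Prop:TD-ifDrestrics} and Proposition~\ref{Corol-Zinv} in combination with the explicit small-length Lie bracket formulas from the beginning of Section~\ref{Sec:Description}. The preliminary observation is that the symmetric closure may be inserted freely inside the accessibility Lie algebra: from $\langle Y_a \colon Y_b \rangle^V = [Y_a^V,[Z,Y_b^V]]$ one has ${\rm Sym}^{(\infty)}(\mathscr Y)^V_{v_q} \subseteq {\rm Lie}^{(\infty)}(Z,\mathscr Y^V)_{v_q}$, hence ${\rm Lie}^{(\infty)}(Z,\mathscr Y^V) = {\rm Lie}^{(\infty)}(Z,{\rm Sym}^{(\infty)}(\mathscr Y)^V)$. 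This rewriting is what allows the hypothesis on $\nabla$ to be brought to bear.

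For~\eqref{eq:AccAlgVert}, the inclusion $\supseteq$ is immediate from the preliminary observation. For the reverse $\subseteq$, Corollary~\ref{Corol-Subset} confines the whole Lie algebra to $T_{v_q}{\rm Sym}^{(\infty)}(\mathscr Y)$, which Proposition~\ref{Prop:TD-ifDrestrics} decomposes as $T_qQ \oplus {\rm Sym}^{(\infty)}(\mathscr Y)_q$ along the horizontal/vertical splitting; intersecting with $V_{v_q}(TQ)$ caps the vertical component at ${\rm Sym}^{(\infty)}(\mathscr Y)_q$, and equality follows.

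For~\eqref{eq:AccAlgHor}, I would first note the base inclusion ${\rm Sym}^{(\infty)}(\mathscr Y)_q \subseteq {\rm Lie}^{(\infty)}(Z,\mathscr Y^V)_{v_q} \cap H_{v_q}(TQ)$ by reading off the horizontal component $-Y$ of $[Z,Y^V] = -Y \oplus \nabla_v Y$ in~\eqref{Eq-ZYV}, with Proposition~\ref{Corol-Zinv} guaranteeing that this bracket and its type-\ref{VF3} iterates remain inside ${\rm Sym}^{(\infty)}(\mathscr Y) \oplus {\rm Sym}^{(\infty)}(\mathscr Y)$ above ${\rm Sym}^{(\infty)}(\mathscr Y)$. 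To upgrade to the full involutive closure, I would exploit the identity $[[Z,Y_1^V],[Z,Y_2^V]]_{v_q} = [Y_1,Y_2]_q \oplus W_q$, which follows by combining~\eqref{Eq-ZYV} with~\eqref{Eq-Zbracket}, and in which $W$ sits in ${\rm Sym}^{(\infty)}(\mathscr Y)$ by virtue of the already-proved~\eqref{eq:AccAlgVert}. Inducting on the depth of the Lie bracket, and re-associating iterated brackets via the Jacobi identity, produces each iterated Lie bracket of ${\rm Sym}^{(\infty)}(\mathscr Y)$-fields as the horizontal component of some element of ${\rm Lie}^{(\infty)}(Z,\mathscr Y^V)_{v_q}$.

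The main obstacle is precisely that induction on the bracket depth in the horizontal slot: at every step one must guarantee that the parasitic vertical contribution produced by the doubling identity remains inside ${\rm Sym}^{(\infty)}(\mathscr Y)$, for otherwise a further double-$Z$ bracket could not be applied to pull the next Lie bracket into horizontal position. The already-established~\eqref{eq:AccAlgVert} is exactly what closes the loop here, since it pins the vertical part to ${\rm Sym}^{(\infty)}(\mathscr Y)_q$ and prevents the induction from leaking out of the desired subspace.
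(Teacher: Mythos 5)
Your proposal is correct and follows essentially the same route as the paper: the same rewriting ${\rm Lie}^{(\infty)}(Z,\mathscr Y^V)={\rm Lie}^{(\infty)}(Z,{\rm Sym}^{(\infty)}(\mathscr Y)^V)$, the same use of Corollary~\ref{Corol-Subset} together with Proposition~\ref{Prop:TD-ifDrestrics} to pin the vertical part for~\eqref{eq:AccAlgVert}, and the same key identity $[[Z,Y_1^V],[Z,Y_2^V]](v_q)=[Y_1,Y_2]_q\oplus W_q$ with $W$ forced into ${\rm Sym}^{(\infty)}(\mathscr Y)$ by~\eqref{eq:AccAlgVert} to obtain~\eqref{eq:AccAlgHor}. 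Your explicit induction on bracket depth merely spells out the step the paper leaves implicit in ``from here~\eqref{eq:AccAlgHor} follows.''
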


\begin{thm} Let $\Sigma$ be an analytic ACCS. If $\nabla$ restricts to ${\rm Sym}^{(\infty)}(\mathscr
Y)$,
then
\begin{enumerate}
\item $\Sigma$ is
accessible from $v_q\in {\rm Sym}^{(\infty)}(\mathscr Y)$
if and only if \begin{align*}{\rm Sym}^{(\infty)}(\mathscr Y)_q& =
\, T_qQ, \\ {\rm Lie}^{(\infty)}(Z,\mathscr{Y}^V)_{v_q}&= \, T_{v_q}
{\rm Sym}^{(\infty)}(\mathscr Y).\end{align*}
\item $\Sigma$ is configuration
accessible from $v_q\in {\rm Sym}^{(\infty)}(\mathscr Y)$ if and
only if
\[{\rm Lie}^{(\infty)}(Z,\mathscr{Y}^V)_{v_q}= T_{v_q} {\rm Sym}^{(\infty)}(\mathscr
Y).\]
\end{enumerate}
\begin{proof} The proof follows from the proof of Theorem~\ref{Th:SussmaanEtAl} and Theorem~\ref{Th:LM97} using~\eqref{eq:AccAlgVert} in
Proposition~\ref{Corol-AccDistrNonZero}. %

Moreover, if ${\rm Lie}^{(\infty)}_{v_q}(Z,\mathscr{Y}^V)= T_{v_q}
{\rm Sym}^{(\infty)}(\mathscr Y)$, then horizontal subspace of ${\rm
Lie}^{(\infty)}_{v_q}(Z,\mathscr{Y}^V)$ at $v_q\in {\rm
Sym}^{(\infty)}(\mathscr Y)$ is the entire tangent space $T_qQ$.
\end{proof}
\label{Thm-AccDistrNonZero}
\end{thm}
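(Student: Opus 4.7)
The plan is to derive both parts from the Sussmann--Jurdjevic style characterization of accessibility (Theorem~\ref{Th:SussmaanEtAl}) combined with the structural description of the accessibility algebra at $v_q$ obtained in Corollary~\ref{Corol-Subset} and Proposition~\ref{Corol-AccDistrNonZero}. The splitting $T_{v_q}{\rm Sym}^{(\infty)}(\mathscr Y)\simeq T_qQ\oplus {\rm Sym}^{(\infty)}(\mathscr Y)_q$ given by Proposition~\ref{Prop:TD-ifDrestrics} is the geometric bridge between the horizontal/vertical pieces of the accessibility algebra and the intrinsic conditions stated in the theorem.

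For~(1), Theorem~\ref{Th:SussmaanEtAl} applied to~\eqref{Eq-Control-Affine-Equiv} says accessibility from $v_q$ is equivalent to ${\rm Lie}^{(\infty)}(Z,\mathscr{Y}^V)_{v_q}=T_{v_q}TQ$. By Corollary~\ref{Corol-Subset} the left-hand side is contained in $T_{v_q}{\rm Sym}^{(\infty)}(\mathscr Y)\simeq T_qQ\oplus {\rm Sym}^{(\infty)}(\mathscr Y)_q$, while $T_{v_q}TQ\simeq T_qQ\oplus T_qQ$. Thus accessibility forces the identification $T_{v_q}{\rm Sym}^{(\infty)}(\mathscr Y)=T_{v_q}TQ$, which by the splitting means ${\rm Sym}^{(\infty)}(\mathscr Y)_q=T_qQ$; once this is in hand the second listed equation collapses to the Sussmann--Jurdjevic condition itself. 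Conversely, assuming both displayed equalities, one has ${\rm Lie}^{(\infty)}(Z,\mathscr{Y}^V)_{v_q}=T_{v_q}{\rm Sym}^{(\infty)}(\mathscr Y)=T_qQ\oplus T_qQ=T_{v_q}TQ$, so Theorem~\ref{Th:SussmaanEtAl} closes the argument.

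For~(2), configuration accessibility from $v_q$ is equivalent to the image of the accessibility algebra under $T_{v_q}\tau_Q$ exhausting $T_qQ$, equivalently to the horizontal projection of ${\rm Lie}^{(\infty)}(Z,\mathscr{Y}^V)_{v_q}$ coinciding with $H_{v_q}(TQ)$. The implication $(\Leftarrow)$ is short: if ${\rm Lie}^{(\infty)}(Z,\mathscr{Y}^V)_{v_q}=T_{v_q}{\rm Sym}^{(\infty)}(\mathscr Y)$, Proposition~\ref{Prop:TD-ifDrestrics} yields that its horizontal slice is the full $H_{v_q}(TQ)$. For $(\Rightarrow)$, given configuration accessibility pick any $u\in T_qQ$ and a lift $u\oplus w\in {\rm Lie}^{(\infty)}(Z,\mathscr{Y}^V)_{v_q}$ (necessarily with $w\in {\rm Sym}^{(\infty)}(\mathscr Y)_q$ by Corollary~\ref{Corol-Subset}); Proposition~\ref{Corol-AccDistrNonZero} also places $0\oplus w$ in the algebra, so subtracting gives $u\oplus 0$. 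Ranging over $u\in T_qQ$ and adding back the vertical piece ${\rm Sym}^{(\infty)}(\mathscr Y)_q$ (again by~\eqref{eq:AccAlgVert}) recovers $T_qQ\oplus{\rm Sym}^{(\infty)}(\mathscr Y)_q$, and the reverse inclusion is exactly Corollary~\ref{Corol-Subset}.

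The main obstacle will be the converse in~(2): equality of full subspaces of $T_{v_q}TQ$ has to be extracted from information about a projection only, and the argument hinges on having the vertical slice pinned down exactly by~\eqref{eq:AccAlgVert}, which in turn depends crucially on $\nabla$ restricting to ${\rm Sym}^{(\infty)}(\mathscr Y)$. Without that hypothesis neither Corollary~\ref{Corol-Subset} nor the clean horizontal/vertical split of Proposition~\ref{Prop:TD-ifDrestrics} is at hand, so the characterization genuinely relies on the restriction assumption rather than being a cosmetic rearrangement of the zero-velocity case.
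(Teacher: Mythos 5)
Your proposal is correct and takes essentially the same route as the paper: Theorem~\ref{Th:SussmaanEtAl} plus Corollary~\ref{Corol-Subset}, the splitting of Proposition~\ref{Prop:TD-ifDrestrics}, and the vertical characterization~\eqref{eq:AccAlgVert} of Proposition~\ref{Corol-AccDistrNonZero}, with the projection criterion for configuration accessibility imported from the proof of Theorem~\ref{Th:LM97} exactly as the paper does. Your subtraction argument in part~(2) simply makes explicit the linear algebra that the paper's terse proof leaves implicit.
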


\section{Examples}\label{Sec:Examples}

Let us use Theorem~\ref{Thm-AccDistrNonZero} to describe the
accessibility of some mechanical controls systems defined by an affine connection from points with
nonzero velocity. These examples are more thoroughly described
in~\cite[Section 7.4.2-3]{AndrewBook}.

\subsection{Planar body with variable-direction thruster}

The configuration manifold for the system is $Q=\mathbb{S}^1\times
\mathbb{R}^2$. We denote the coordinates by $(\theta,x,y)$. The
Riemannian metric for the system is
\begin{equation*}
\mathbb{G}=J {\rm d}\theta \otimes {\rm d}\theta +m({\rm d}x \otimes
{\rm d}x +{\rm d}y \otimes {\rm d}y),
\end{equation*}
where $m$ is the mass of the body and $J$ is its moment of inertia
about its center of mass. Then the Christoffel symbols are all zero.

This systems has two input vector fields given by
\begin{align*}
Y_1&= \, \frac{\cos \theta}{m} \frac{\partial}{\partial x}
+\frac{\sin \theta}{m} \frac{\partial}{\partial y},\\
Y_2&=\, -\frac{h}{J}\frac{\partial}{\partial \theta}-\frac{\sin
\theta}{m} \frac{\partial}{\partial x}+\frac{\cos \theta}{m}
\frac{\partial}{\partial y}.
\end{align*}
This system is an affine connection control system and we can study
its accessibility from nonzero velocities in ${\rm
Sym}^{(\infty)}\mathscr{Y}$. As ${\rm
Sym}^{(\infty)}\mathscr{Y}_q=T_qQ$, it is trivial that $\nabla$ 
restricts to ${\rm Sym}^{(\infty)}\mathscr{Y}_q$. Moreover, ${\rm
Lie}^{(\infty)}({\rm Sym}^{(\infty)}\mathscr{Y})_q\simeq T_qQ$.
Having in mind Proposition~\ref{Prop:TD-ifDrestrics},
Proposition~\ref{Corol-AccDistrNonZero} and
Theorem~\ref{Thm-AccDistrNonZero}, we can conclude that the system
is accessible and configuration accessible from $v_q\in {\rm
Sym}^{(\infty)}\mathscr{Y}$.

\subsection{Rolling disk}

The configuration manifold is $Q=\mathbb{R}^2\times \mathbb{S}^1
\times \mathbb{S}^1$. The coordinates are denoted by
$(x,y,\theta,\phi)$. This system has constraints and to study the
accessibility of the system the constrained connection
in~\cite[Section 4.5.5]{AndrewBook} will have to be considered. The
configuration manifold then is restricted to $\overline{Q}$ with
dimension 2.

All the Christoffel symbols associated with the constrained
connection are zero in terms of the $\mathbb{G}$-orthogonal generators
\begin{align*}
X_1&=\; \rho \cos \theta \frac{\partial}{\partial x}+\rho \sin
\theta \frac{\partial}{\partial y} +\frac{\partial}{\partial
\phi},\\
X_2&=\;\frac{\partial}{\partial \theta}.
\end{align*}

Two input vector fields are considered for this system
\begin{equation*}
Y_1\,=\, \frac{1}{J_{\rm spin}} X_2, \qquad Y_2\,=\,
\frac{1}{m\rho^2+J_{\rm roll}}X_1.
\end{equation*}

Let us consider different cases of input vector fields:
\begin{enumerate}
\item $Y_1$ only: The constrained affine connection restricts to
${\rm Sym}^{(\infty)}Y_1=Y_1$. The system is not accessible from
$v_q\in {\rm Sym}^{(\infty)}Y_1$ because of
Theorem~\ref{Thm-AccDistrNonZero}.

\item $Y_2$ only: The constrained affine connection restricts to
${\rm Sym}^{(\infty)}Y_2=Y_2$. The system is not accessible from
$v_q\in {\rm Sym}^{(\infty)}Y_2$ because of
Theorem~\ref{Thm-AccDistrNonZero}.

\item $Y_1$ and $Y_2$: The constrained affine connection restricts to
${\rm Sym}^{(\infty)}(Y_1,Y_2)_q=T_q\overline{Q}$. Using similar
reasoning as for the planar rigid body, we conclude that the system
is accessible and configuration accessible from $v_q\in {\rm
Sym}^{(\infty)}\mathscr{Y}$.
\end{enumerate}

In order to decide about the configuration accessibility for
single-input case, we will have to compute ${\rm
Lie}^{(\infty)}(Z,\mathscr{Y})$. In these particular cases
computations show that the horizontal subspace of ${\rm
Lie}^{(\infty)}(Z,\mathscr{Y}^V)$ is never the entire tangent space
$T_qQ$. Thus the system is not configuration accessible from $v_q\in
{\rm Sym}^{(\infty)}\mathscr{Y}$ when $\mathscr{Y}=\{Y_1\}$ or
$\mathscr{Y}=\{Y_2\}$.


\section{CONCLUSIONS AND FUTURE WORKS}

\subsection{Conclusions}

In this paper we have generalized results about the accessibility
for mechanical control systems called affine connection control
systems that were proved in \cite{LM97}\@. Precisely,
Theorem~\ref{Th:LM97} has been extended to Theorem~\ref{Thm-AccDistrNonZero}. The latter result characterizes intrinsically the
accessibility properties of a mechanical control system at points
with velocities in ${\rm Sym}^{(\infty)}\mathscr{Y}$ if
$\nabla$ restricts to ${\rm Sym}^{(\infty)}\mathscr{Y}$.

First we have identified the primitive brackets at nonzero velocity
in Theorem \ref{Prop-Type-VF}. This result and Proposition~\ref{Prop:TD-ifDrestrics} have allowed us to
characterize the accessibility distribution at points in ${\rm
Sym}^{(\infty)}\mathscr{Y}$ as long as $\nabla$ restricts to
${\rm Sym}^{(\infty)}\mathscr{Y}$, see Corollary~\ref{Corol-Subset},
Propositions~\ref{Corol-Zinv} and~\ref{Corol-AccDistrNonZero}.

\subsection{Future Works}

It still remains to extend Theorem \ref{Thm-AccDistrNonZero} to any
velocity point under the assumption of having $\nabla$ restricted to
${\rm Sym}^{(\infty)}\mathscr{Y}$ and also discarding this
assumption. Once we succeed to completely characterize the accessibility
algebra, our efforts will focus on extending the characterization of
controllability in the sense of STLC given in~\cite{LM97} at points with
non-zero velocity, cf. Definition~\ref{Def:AccControl},

It is expected that infinitesimal holonomy algebra, cf.~\cite{KN},
will play an important role for these extensions.


\addtolength{\textheight}{-2cm}   

\section{ACKNOWLEDGMENTS}

The author gratefully acknowledges Professor A. D. Lewis for useful discussions.

\end{document}